\newtheorem{Theorem}{Theorem}[section]
\newtheorem{Lemma}[Theorem]{Lemma}
\newtheorem{Proposition}[Theorem]{Proposition}
\newtheorem{Assumption}{Assumption}
\theoremstyle{definition}
\newtheorem{Definition}{Definition}[section]
\newtheorem{Problem}{Problem}[section]
\theoremstyle{remark}
\newtheorem{Remark}{Remark}[section]
\def \Vh0{\stackrel{\circ}{V}_h} \def\to{\rightarrow}
\def\Om{\Omega}   
\newcommand{\q}{\quad}
\def\l{\label}  \def\f{\frac}  \def\fa{\forall}
\def\b{\beta}  \def\a{\alpha} 
 \def \th{\theta}
\def\eps{\varepsilon}
 \def\t{\times}
\def\u{{\bf u}} \def\v{{\bf v}}
\def\cA{\mathcal{A}}
\def\cB{\mathcal{B}}
\def\cF{\mathcal{F}}
\def\cI{\mathcal{I}}
\def\cL{\mathcal{L}}
\def\cM{\mathcal{M}}
\def\cN{\mathcal{N}}
\def\cO{\mathcal{O}}
\def\bA{{\textbf{A}}}
\def\N{{\mathbb{N}}}
\def\bP{\mathbb{P}}
\def\R{{\mathbb R}}
\newcommand{\ex}{\mathbb{E}}
\def\bb{\begin{equation}} \def\ee{\end{equation}}
\def\bn{\begin{enumerate}} \def\en{\end{enumerate}}
\begin{document}

\title{A penalty scheme for monotone systems with interconnected obstacles: convergence and error estimates}
\author{
Christoph Reisinger\thanks{Mathematical Institute, University of Oxford, United Kingdom ({\tt christoph.reisinger@maths.ox.ac.uk, yufei.zhang@maths.ox.ac.uk})}
\and
Yufei Zhang\footnotemark[2]
}

\maketitle

\begin{abstract}
  We present a novel penalty approach for a class of quasi-variational inequalities (QVIs) involving monotone systems and interconnected obstacles. We show that for any given positive switching cost, the solutions of the penalized equations 
converge monotonically to those of the QVIs. We estimate the penalization errors and are able to deduce that 
the optimal switching regions are constructed exactly. 
We further demonstrate that as the switching cost tends to zero, the QVI degenerates into an  equation of HJB type,  which is approximated by the penalized equation at the same order (up to a log factor) as that for positive switching cost.
Numerical experiments on optimal switching problems  are presented to illustrate the theoretical
results and to demonstrate the effectiveness of the method.

\end{abstract}

\begin{keywords}
Quasi-variational inequalities, monotone systems, penalty methods, monotone convergence, error estimate.
\end{keywords}

\begin{AMS}
	34A38, 65M12, 65K15
\end{AMS}

\section{Introduction}\l{sec:introduction}

In this article, we consider the following discrete quasi-variational inequality (QVI): 
\begin{Problem} \l{pro:switching}
Find $u=(u^1,\ldots, u^d)\in \R^{N\t d}$,  such that
\bb\l{eq:switching}
G_i(u)=\min (F_i(u),\; u^i-\cM_{i} u)=0,\q i\in \cI\coloneqq \{1,\cdots, d\},\, d\ge 2,
\ee
where 
$\cM_{i} u\coloneqq \max_{j\not =i}(u^j-c^{i,j})$ with $c^{i,j}\in (0,\infty)$ for all $j\not=i$, and $F=(F_i)_{i\in\cI}:\R^{N\t d}\to \R^{N\t d}$ is a continuous function satisfying the following  condition:
\begin{itemize}
\item 
There exists a constant $\gamma>0$ such that for any  $u,v\in \R^{N\t d}$, $i\in\cI$ and $l\in \cN\coloneqq \{1,\ldots, N\}$ with $u^i_l-v^i_l=\max_{j\in \cI,k\in\cN}u^j_k-v^j_k\ge 0$, we have
\bb\l{eq:mono}
F_i(u)_l-F_i(v)_l\ge \gamma(u^i_l-v^i_l).
\ee
\end{itemize}
\end{Problem}

Following \cite{djehiche2017}, we will refer to $\cM u$ as an interconnected obstacle, where $\cM$ is a special form of the intervention operator in \cite{azimzadeh2016weakly}. Similar to \cite{briani2012} in the continuous context, we shall also refer to a continuous function $F=(F_i)_{i\in\cI}$ with condition \eqref{eq:mono} as a monotone system.
Such discrete systems arise naturally from  a sensible discretization (i.e., stable and convergent)  of elliptic and parabolic QVIs associated to hybrid optimal control problems with switching controls (see e.g.~\cite{pham2009,azimzadeh2016weakly,djehiche2017,ferretti2017}). Since each $F_i$ could depend on all components of the solution $u$, the system $F$ in Problem \ref{pro:switching} not only includes systems of Isaacs equations  \cite{jakobsen2006} or possibly nonlocal Hamilton-Jacobi-Bellman (HJB) variational inequalities \cite{witte2012,reisinger2018}, but also coupled systems stemming from regime switching models \cite{babbin2014} and non-coorperative games \cite{dockner2000}. To simplify the presentation, we shall focus on the case with constant switching cost 
$c^{i,j}\equiv c>0$ for all $j\not =i$, $i\in \cI$, but the results and analysis extend naturally to the  cases with general switching costs $c^{i,j}_l>0$ for all $j\not =i$.

By rewriting the  interconnected obstacle $u^i-\cM_{i} u$ as $\min_{j\not =i}(P^ju-c)$, one can easily see that  the matrices $(P^j)_{j\not=i}$  involved are neither $M$-matrices nor weakly chained diagonally dominant matrices (\cite{azimzadeh2016weakly}). Hence a direct application of policy iteration could fail due to the  possible singularity of the matrix iterates. However, 
as we shall see later, penalty schemes are always applicable (even for zero switching cost) and it is straightforward to derive convergent iterative solvers for the penalized equations, 
 which make penalty schemes more appealing for solving the QVI  \eqref{eq:switching}. Thus it is important to design efficient penalty schemes for general monotone systems with interconnected obstacles.

Moreover, the implementation of penalty schemes, in particular the choice of penalty parameters, depends greatly on the accuracy of the penalty approximation with   a given penalty parameter, hence it is practically important to quantify the penalty errors.
However, the non-diagonal dominance of the  matrices $(P^j)_{j\not=i}$ poses a significant challenge for estimating the penalization errors. In fact, an essential step in estimating the penalty error for standard obstacle problems is to show that if $u^\rho$ solves the penalized equation with the penalty parameter $\rho\ge 0$, then $u^\rho+C/\rho$ is a feasible solution to the obstacle problem (i.e., it satisfies the constraint) for a large enough constant $C$ independent of the penalty parameter (see e.g.~\cite{reisinger2018}). But this is clearly false in the current setting since the interconnected obstacles remain invariant under any vertical shift of the solutions. We shall overcome these difficulties by introducing certain regularization procedures, which consist of approximating the switching problem by a sequence of  obstacle problems involving diagonally dominant  matrices, and recover the same convergence rates  (up to a log factor) as those for conventional obstacle problems.  

Finally, we remark that the monotonicity condition \eqref{eq:mono} is essentially different from the monotonicity in the Euclidean norm discussed in \cite{huang2010},  which enables us to consider penalty methods for the QVIs of fully nonlinear degenerate equations including Isaacs equations. To  the best of our knowledge, there is no published penalty scheme with  rigorous error estimates covering such general QVIs. However, there is a vast literature on penalty methods for variational inequalities (see e.g.~\cite{forsyth2002,ito2006,huang2010}).
For  works covering specific extensions, we refer the reader to \cite{witte2011} for penalty approximations to HJB equations, to \cite{witte2012,reisinger2018} for applying policy iteration together with penalization to solve HJB variational inequalities, and to \cite{bensoussan1982,azimzadeh2016weakly,azimazadeh2018} for an application of penalty schemes to classical HJBQVIs (without error estimates).

The main contributions of our paper are:
\begin{itemize} 

\item  
We propose penalty schemes for discrete monotone systems with interconnected obstacles.
We present a novel analysis technique for the well-posedness of the penalized equations with a general class of penalty terms by smoothing the monotone systems. We further demonstrate that the solution of the penalized equation converges to the solution of \eqref{eq:switching} monotonically from below, which  subsequently gives a
constructive proof for the existence of  solutions to Problem \ref{pro:switching}.

\item 
Based on regularizations of the interconnected obstacles, we  estimate the penalization error for monotone systems with concave nonlinearity, which include HJBQVIs as special cases (see the discussions below Assumption \ref{assum:concave}). We introduce two iterative regularization procedures, namely the iterated  optimal stopping approximation and the time-marching iteration, which enable us to demonstrate that for any given positive switching cost, the penalty approximation using a penalty term with degree $\sigma>0$ enjoys convergence of order $\cO(\rho^{-\sigma}
\ln\rho)$ as  the penalty parameter $\rho\to \infty$, independent of the number of switching regimes $d$. 
We emphasize that, unlike the error estimates for HJBQVIs in \cite{boulbrachene2001fem,ferretti2017}, our analysis does not require the running reward functions  or the solutions to have a unique sign.
Moreover, our error estimate also enables us to exactly construct the switching regions of Problem \ref{pro:switching}, which to the best of our knowledge is new, even for classical HJBQVIs.

\item We further investigate the limiting case with zero switching cost, where Problem \ref{pro:switching} degenerates to an equation of  HJB type, i.e., Problem \ref{pro:hjb} below. In this case, the penalty scheme of \eqref{eq:switching} leads to a novel penalty scheme for  such equations, which admits the same convergence rates (up to a log factor) as those for fixed positive switching cost when the penalty parameter tends to infinity. 
We remark that this error estimate applies to non-convex/non-concave systems, such as systems of Isaacs equations.

\item Contrary to \cite{azimzadeh2016weakly,azimazadeh2018}, the penalty is applied to each component of the system, which enables us to derive easily implementable and efficient iterative schemes for penalized equations  without taking the pointwise maximum over all switching components.

\item Numerical examples for  infinite-horizon optimal switching problems in the two-regime case and the three-regime case are included to   illustrate the theoratical results for the asymptotic  behaviours of the penalty errors with respect to the penalty parameter and  switching cost.
\end{itemize}

We now summarize some of our main results  in the following diagram. Suppose the function $F$ in Problem \ref{pro:switching} is concave and the penalty function in Problem \ref{pro:penalty} below is given by $\pi(y)=y^+$. Then 
the following error estimates hold:

$$
\begin{tikzpicture}[>=latex']
        \tikzset{block/.style= {draw, rectangle, align=center,minimum width=2cm,minimum height=1cm},
        }
        \node [block]  (urc) {$u^{c,\rho}$ solves Problem \ref{pro:penalty}\\ with $c,\rho>0$};

        \node [block, right = 1cm of urc] (urc_r){$u^{c,\rho}\nearrow u^c$ at rate \\ 
        $\cO\left(\ln\rho/(c \rho)\right)$
        as $\rho\to \infty$};
        \node [block, right = 1cm of urc_r] (uc){$u^{c}$ solves Problem \ref{pro:switching}\\ with $c>0$};
	
	\node [block, below = 0.6cm of urc] (urc_c){$u^{c,\rho}\nearrow u^\rho$ at  rate \\ $\cO(c\rho)$ as $c\to0$};
        \node [block, below = 0.6cm of urc_c] (ur){$u^{\rho}$ solves Problem \ref{pro:penalty}\\ with $c=0$, $\rho>0$};
	
	\node [block, right = 1.65cm of ur] (ur_r){$u^{\rho}\nearrow \u$ at  rate \\ $\cO(1/\rho)$ as $\rho\to \infty$};

	\node [block, below = 0.6cm of uc] (uc_c){$u^{c}\nearrow \u$ at  rate \\ $\cO(c)$ as $c\to0$};
	\node [block, below = 0.6cm of uc_c] (u){$\u$ solves Problem \ref{pro:hjb}};
	
        \path[draw, ->]
            (urc) edge (urc_r)
            (urc_r) edge (uc)
            (urc) edge (urc_c)
            (urc_c) edge (ur)            
            (ur) edge (ur_r)
            (ur_r) edge (u)         
            (uc) edge (uc_c)         
            (uc_c) edge (u)            
                       ;
    \end{tikzpicture}
$$

The remainder of this paper is organized as follows. We shall propose  a  class of penalty approximations to Problem \ref{pro:switching} in Section \ref{sec:penalty},  and demonstrate its well-posedness and monotone convergence. Then we construct two regularization procedures for Problem \ref{pro:switching} in Section \ref{sec:regularization}, which enable us to obtain an error estimate of the penalization error for  QVIs with positive switching cost in Section \ref{subsec:error_positive}. 
We then proceed to estimate the penalization errors for QVIs with vanishing switching cost in Section \ref{sec:error_zero}.
Numerical examples for  two-regime and three-regime optimal switching problems are presented in Section \ref{sec:numerical} to illustrate the effectiveness of our algorithms.

\section{Penalty approximations of  QVIs}\l{sec:penalty}
In this section, we discuss how Problem \ref{pro:switching} can be approximated by a sequence of penalized equations. The well-posedness of the penalized equations and  their  monotone convergence shall be established, which subsequently lead to a constructive proof for the well-posedness of \eqref{eq:switching}.  

We start by collecting some useful notation. 
For any given matrix $A, B\in \R^{d_1\t d_2}$, 
we denote by $A\ge B$ the relation $A_{ij} \ge B_{ij}$ for all indices $i,j$, by $\min(A,B)$ the matrix of elements $\min(A_{ij},B_{ij})$, 
by  $A^+=\max(A,0)$ (resp.~$A^-=\max(-A,0)$) the (element-wise) positive  (resp.~negative) part of $A$, and by $\|A\|$ the  usual sup-norm $\| A\|=\max_{i,j}|A_{ij}|$.

Before introducing the penalty equations, we first adapt the non-loop arguments  in \cite{jakobsen2006} to the current discrete setting and establish a comparison theorem of the QVI \eqref{eq:switching}.

\begin{Proposition}\l{prop:comp_s}
Suppose $c>0$ and $u=(u^i)_{i\in \cI}$ (resp.~$v=(v^i)_{i\in \cI}$) satisfies 
$$
\min (F_i(u),\; u^i-\cM_{i} u)\le 0\q \textnormal{(resp.~$\ge 0$)}, \q i\in \cI,
$$
then we have $u\le v$.
\end{Proposition}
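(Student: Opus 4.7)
The plan is to argue by contradiction, adapting the non-loop argument of Jakobsen--Karlsen (as hinted just before the statement) to the discrete setting using the monotonicity condition \eqref{eq:mono} and the strict positivity of the switching cost $c>0$.

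Suppose, for contradiction, that $M\coloneqq \max_{i\in\cI,\,l\in\cN}(u^i_l-v^i_l)>0$, and fix a maximizer $(i_0,l_0)$. Since $v$ is a supersolution we have both
\[
F_{i_0}(v)_{l_0}\ge 0 \quad\text{and}\quad v^{i_0}_{l_0}-\cM_{i_0}v_{l_0}\ge 0,
\]
while for $u$ at least one of $F_{i_0}(u)_{l_0}\le 0$ or $u^{i_0}_{l_0}-\cM_{i_0}u_{l_0}\le 0$ holds. In the first case, the monotonicity assumption \eqref{eq:mono} at the global maximizer $(i_0,l_0)$ yields
\[
0\ge F_{i_0}(u)_{l_0}-F_{i_0}(v)_{l_0}\ge \gamma(u^{i_0}_{l_0}-v^{i_0}_{l_0})=\gamma M>0,
\]
a contradiction. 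This handles the ``equation branch'' and is the easy step.

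The substantive step is the ``obstacle branch'': assume $u^{i_0}_{l_0}\le\cM_{i_0}u_{l_0}$, so there exists $j_1\neq i_0$ with $u^{j_1}_{l_0}-u^{i_0}_{l_0}\ge c$. Using that $(i_0,l_0)$ is a maximizer, one gets $v^{j_1}_{l_0}-v^{i_0}_{l_0}\ge u^{j_1}_{l_0}-u^{i_0}_{l_0}\ge c$, which combined with the supersolution constraint $v^{i_0}_{l_0}\ge v^{j_1}_{l_0}-c$ forces equality throughout, and in particular $(j_1,l_0)$ is again a maximizer with $u^{j_1}_{l_0}\ge u^{i_0}_{l_0}+c$. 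I then repeat the argument at $(j_1,l_0)$: either $F_{j_1}(u)_{l_0}\le 0$ and we close as above, or we obtain $j_2\neq j_1$ with $u^{j_2}_{l_0}\ge u^{j_1}_{l_0}+c\ge u^{i_0}_{l_0}+2c$, and so on. The key observation (the ``non-loop'' part) is that each new index must be distinct from all previous ones, since the values $u^{j_m}_{l_0}$ are strictly increasing in $m$ by increments of at least $c>0$. As $|\cI|=d<\infty$, this chain cannot continue for more than $d$ steps, so we must eventually land in the equation branch and reach the contradiction above.

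The main obstacle is verifying cleanly that once we pass from $(i_0,l_0)$ to $(j_1,l_0)$, the new pair is still a global maximizer of $u-v$, which is what lets us re-apply \eqref{eq:mono} uniformly along the chain; this needs the chain of inequalities $M=u^{i_0}_{l_0}-v^{i_0}_{l_0}\ge u^{j_1}_{l_0}-v^{j_1}_{l_0}\ge u^{j_1}_{l_0}-(v^{j_1}_{l_0})$ together with the supersolution bound on $v$ to collapse to equality. Once this is in place, the finite-$d$ pigeonhole argument on the $j_m$ closes the proof. The strict positivity of $c$ is essential exactly here: it produces the jump of size $c$ at each step that prevents loops.
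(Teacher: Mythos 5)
Your proof is correct and follows essentially the same non-loop argument as the paper: chase the obstacle branch through a chain of maximizers of $u-v$ at the fixed index $l_0$, use $c>0$ to show the chain cannot revisit a regime, and conclude that some maximizer must satisfy the equation branch, where \eqref{eq:mono} yields $M\le 0$. The only (cosmetic) difference is that you rule out repetition directly via the strictly increasing values $u^{j_m}_{l_0}$, whereas the paper finds a loop by pigeonhole and contradicts it by telescoping.
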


\begin{proof}
%
%
%

Let $M\coloneqq \max_{j,k}(u^j_k-v^j_k)=u^i_l-v^i_l$. Suppose that $u^{j}_l\le (M_ju)_l$ for all $j\in \Gamma$, where $\Gamma\coloneqq \{j\mid u^j_l-v^j_l=M\}$. Pick $i_1\in \Gamma$ such that $u^{i_1}_l\le (M_{i_1}u)_l=u^{i_2}_l-c$ for some $i_2\not=i_1$. Since $v^{i_1}_l\ge (\cM_{i_1}v)_l\ge v^{i_2}_l-c$, we have 
$$
u^{i_1}_l-u^{i_2}_l\le -c\le v^{i_1}_l-v^{i_2}_l,
$$
which implies $u^{i_2}_l-v^{i_2}_l\ge u^{i_1}_l-v^{i_1}_l=M$. By the maximality of $M$, the previous inequality is in fact an equality and hence $i_2$ is in $\Gamma$. Continuing this way, we can pick indices $i_3,i_4,\ldots$ in $\Gamma$ such that $u^{i_k}_l-u^{i_{k+1}}_l\le -c$ for all $k\ge 1$, which further implies that
$$
u^{i_1}_l-u^{i_n}_l=\sum_{k=1}^{n-1} u^{i_k}_l-u^{i_{k+1}}_l\le -(n-1)c<0,\q \textnormal{for $n>1$.}
$$
Since $\Gamma$ is finite, we can use the pigeonhole principle to find $n>1$ with $i_1-i_n=0$, arriving at a contradiction.

The above argument establishes that $u^{i_0}_l> (M_{i_0}u)_l$ for some $i_0\in \Gamma$. Consequently, we have $F_{i_0}(u)_l\le 0\le  F_{i_0}(v)_l$. Combining this with the monotonicity \eqref{eq:mono} of $F$, we have $M\le 0$.
\end{proof}

A direct consequence of Proposition \ref{prop:comp_s} is the uniqueness  of solutions to \eqref{eq:switching}. The existence of solutions to Problem  \ref{pro:switching} shall be established constructively via penalty approximations below (see Remark \ref{rmk:wp}).

Now we are ready to propose the penalty approximation of the QVI \eqref{eq:switching}, which is  an extension of the ideas used for  HJB obstacle problems in \cite{witte2012,reisinger2018}.
For any given parameter $\rho \ge 0$, we shall consider the following penalized problem: 
\begin{Problem}\l{pro:penalty}
Find $u^\rho=(u^{\rho,i})_{i\in\cI}\in \R^{N\t d}$ such that 
\bb\l{eq:penalty}
G^\rho_i(u^\rho)_l\coloneqq F_i(u^{\rho})_l-\rho\sum_{j\not =i}\pi( u^{\rho,j}_l-c-u^{\rho,i}_l)=0,\q i\in \cI, \, l\in\cN,
\ee
where the penalty term $\pi:\R\to \R$ is a continuous non-decreasing function satisfying $\pi|_{(-\infty,0]}=0$ and $\pi|_{(0,\infty)}>0$.
\end{Problem}
\begin{Remark}
In \eqref{eq:penalty}, the penalty is applied to each component of the switching system, thus  efficient iterative schemes for penalized equations  can be implemented without taking the pointwise maximum over all switching components at each index $l\in \cN$.  
However, all the statements below can be shown to hold for penalized problems with penalty terms involving the maximum of all switching components, such as $\max_{j\not =i}\pi( u^{\rho,j}-c-u^{\rho,i})$ or $\pi( \cM_iu^\rho-u^{\rho,i})$.


Due to the fact that the control $j$ takes only $d$ distinct values, we can apply the penalty term finitely many times (once per value). This is not directly possible in the framework of \cite{azimazadeh2018,azimzadeh2016weakly}, where the number of attainable values for the control in the intervention operator grows unbounded as the meshing parameter in the approximation of an infinite control set approaches zero (see \cite{kharroubi2010} for an extension of such a penalty scheme to general intervention operators with an infinite number of control values: the summation is replaced by an integral, which might subsequently be approximated by quadrature).

\end{Remark}

The following result shows a comparison principle for the penalized equation with a fixed penalty parameter $\rho$, which not only implies the uniqueness of solutions to the penalized equations, but also plays a crucial role in the  convergence analysis of the penalty approximations.

\begin{Proposition}\l{prop:comparison_p}
For any given penalty parameter $\rho\ge 0$ and switching cost $c\ge 0$, suppose $u^\rho=(u^{\rho,i})_{i\in \cI}$ (resp.~$v^\rho=(v^{\rho,i})_{i\in \cI}$) satisfies 
$$
F_i(u^\rho)-\rho\sum_{j\not =i}\pi(u^{\rho,j} -c-u^{\rho,i})\le 0 \q \textnormal{(resp.~$\ge 0$)}, \q i\in \cI,
$$
then we have $u^{\rho}\le v^{\rho}$.
\end{Proposition}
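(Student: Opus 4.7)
The plan is to follow the same strategy as in Proposition \ref{prop:comp_s} (look at a global maximizer of $u^\rho - v^\rho$ and argue by contradiction), but the proof should be significantly simpler because the penalty already acts componentwise at each index $l$: we will not need the pigeonhole/loop construction of Proposition \ref{prop:comp_s}, since the penalty term on each equation directly involves the other components at the \emph{same} index.

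First I would set $M\coloneqq \max_{j\in\cI,k\in\cN}(u^{\rho,j}_k-v^{\rho,j}_k)$ and let $(i,l)$ be a maximizer, so that $M=u^{\rho,i}_l-v^{\rho,i}_l$. The goal is $M\le 0$; suppose for contradiction $M>0$. Since we are at a global maximizer with $M\ge 0$, the monotonicity hypothesis \eqref{eq:mono} applies and yields
\[
F_i(u^\rho)_l-F_i(v^\rho)_l\ge \gamma M.
\]
Subtracting the sub- and supersolution inequalities at $(i,l)$ and combining with the above gives
\[
\rho\sum_{j\neq i}\bigl[\pi(u^{\rho,j}_l-c-u^{\rho,i}_l)-\pi(v^{\rho,j}_l-c-v^{\rho,i}_l)\bigr]\ge \gamma M>0.
\]

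Next, for each $j\neq i$ I would use maximality: $u^{\rho,j}_l-v^{\rho,j}_l\le M = u^{\rho,i}_l-v^{\rho,i}_l$, which rearranges to
\[
u^{\rho,j}_l-c-u^{\rho,i}_l\le v^{\rho,j}_l-c-v^{\rho,i}_l.
\]
Since $\pi$ is non-decreasing, every summand on the left-hand side of the displayed inequality is $\le 0$, giving $0\ge \gamma M>0$, a contradiction. Hence $M\le 0$, i.e.\ $u^\rho\le v^\rho$.

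I do not foresee a real obstacle here: the only ingredients are the monotonicity \eqref{eq:mono} of $F$ at a global maximizer and the monotonicity of $\pi$, and the componentwise structure of \eqref{eq:penalty} is exactly what makes the loop/pigeonhole step of Proposition \ref{prop:comp_s} unnecessary. The argument moreover is uniform in $\rho\ge 0$ and $c\ge 0$, so the stated range of parameters is covered automatically; as an immediate corollary one obtains uniqueness of solutions to Problem \ref{pro:penalty}.
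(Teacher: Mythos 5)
Your proof is correct and follows essentially the same route as the paper's: both evaluate the sub- and supersolution inequalities at a global maximizer of $u^\rho-v^\rho$, use the monotonicity of $\pi$ to compare the penalty sums there, and then invoke the monotonicity condition \eqref{eq:mono} of $F$ to conclude $M\le 0$ (the paper argues directly rather than by contradiction, but the ingredients and their order are identical). You are also right that the loop/pigeonhole argument of Proposition \ref{prop:comp_s} is not needed here.
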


\begin{proof}

Let $M\coloneqq \max_{j,k}u^{\rho,j}_k-v^{\rho,j}_k=u^{\rho,i}_l-v^{\rho,i}_l$. Then we have
 $u^{\rho,j}_l-c-u^{\rho,i}_l\le v^{\rho,j}_l-c-v^{\rho,i}_l$ for all $j\not =i$, and hence
$\sum_{j\not =i}\pi( u^{\rho,j}_l-c-u^{\rho,i}_l)\le \sum_{j\not =i}\pi ( v^{\rho,j}_l-c-v^{\rho,i}_l)$.
This, along with the fact that 
\bb\l{eq:mono_p}
F_i(u^\rho)_l-F_i(v^\rho)_l-\rho\bigg(\sum_{j\not =i}\pi( u^{\rho,j}_l-c-u^{\rho,i}_l)-\sum_{j\not =i}\pi ( v^{\rho,j}_l-c-v^{\rho,i}_l)\bigg)\le 0,
\ee
leads to $F_i(u^\rho)_l-F_i(v^\rho)_l\le 0$. Then we can conclude from the monotonicity of $F$ that $M\le 0$.
\end{proof}

The next result presents an a priori estimate of the solution to the penalized equations, independent of the penalty parameter $\rho$ and switching cost $c$.
\begin{Lemma}\l{lem:bound_p}
Suppose $u^\rho$ solves Problem \ref{pro:penalty} with  given penalty parameter $\rho\ge 0$ and switching cost $c\ge 0$, then $\|u^{\rho}\|\le  \|F(0)\|/\gamma$.
\end{Lemma}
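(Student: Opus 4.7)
The plan is to bound $u^\rho$ from above and below separately, each time comparing $u^\rho$ to the constant zero solution via the monotonicity condition \eqref{eq:mono}, exploiting the sign structure of the penalty term $\pi$ to cancel it at the extremal index.

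For the upper bound, let $M\coloneqq \max_{j\in\cI,k\in\cN} u^{\rho,j}_k$ and suppose $M>0$ (otherwise $u^\rho\le 0 \le \|F(0)\|/\gamma$ is immediate). Pick $(i,l)$ achieving this maximum, so that $u^{\rho,i}_l=M>0$ and $u^{\rho,j}_l - c - u^{\rho,i}_l \le -c \le 0$ for every $j\neq i$. Since $\pi$ vanishes on $(-\infty,0]$, the penalty sum in \eqref{eq:penalty} at the index $(i,l)$ is $0$, so the penalized equation collapses to $F_i(u^\rho)_l=0$. Since $u^{\rho,i}_l - 0 = M = \max_{j,k}(u^{\rho,j}_k - 0)\ge 0$, the monotonicity \eqref{eq:mono} applied with $v=0$ gives $F_i(u^\rho)_l - F_i(0)_l \ge \gamma u^{\rho,i}_l$, yielding $u^{\rho,i}_l \le -F_i(0)_l/\gamma \le \|F(0)\|/\gamma$.

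For the lower bound, let $m\coloneqq \min_{j,k} u^{\rho,j}_k$ and suppose $m<0$ (otherwise the bound is trivial). Pick $(i,l)$ achieving the minimum, so that $0 - u^{\rho,i}_l = -m = \max_{j,k}(0 - u^{\rho,j}_k)>0$. Now the monotonicity \eqref{eq:mono} applied with the roles of $u$ and $v$ swapped (i.e.\ to $0$ and $u^\rho$) gives
\[
F_i(0)_l - F_i(u^\rho)_l \ge \gamma(0 - u^{\rho,i}_l) = -\gamma u^{\rho,i}_l.
\]
Meanwhile, since $\pi\ge 0$, the penalized equation \eqref{eq:penalty} forces $F_i(u^\rho)_l = \rho\sum_{j\neq i}\pi(u^{\rho,j}_l - c - u^{\rho,i}_l)\ge 0$. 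Combining these two inequalities yields $F_i(0)_l \ge -\gamma u^{\rho,i}_l$, hence $u^{\rho,i}_l \ge -F_i(0)_l/\gamma \ge -\|F(0)\|/\gamma$.

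There is no serious obstacle here; the argument is essentially the same one-sided comparison with $0$ that underlies Proposition \ref{prop:comparison_p}, except that at the maximum one gains the extra fact that the penalty term disappears (thanks to $\pi\equiv 0$ on $(-\infty,0]$ and $c\ge 0$), while at the minimum one uses only the nonnegativity of $\pi$. The only subtlety is being careful with the direction of the monotonicity inequality in each case, which is why it is cleaner to split into the upper and lower bounds rather than attempting a unified argument.
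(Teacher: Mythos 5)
Your proof is correct and is essentially the paper's argument: at a positive maximum the penalty sum vanishes because every argument $u^{\rho,j}_l-c-u^{\rho,i}_l\le -c\le 0$, reducing the equation to $F_i(u^\rho)_l=0$, while at a negative minimum one only needs $\pi\ge 0$ to get $F_i(u^\rho)_l\ge 0$, and \eqref{eq:mono} against $v=0$ finishes each case. The paper merely packages the two cases by choosing the index where $|u^{\rho,i}_l|=\|u^\rho\|$ and splitting on its sign, which is the same computation.
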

\begin{proof}
Let  $|u^{\rho,i}_l|=\|u^{\rho}\|$. Suppose that $u^{\rho,i}_l\ge 0$, then $u^{\rho,j}_l-c-u^{\rho,i}_l\le 0$ for all $j\not=i$, hence we deduce from \eqref{eq:mono} that 
$$
\gamma (u^{\rho}_l-0)\le  F_i(u^{\rho})_l-F_i(0)_l= \rho\sum_{j\not =i}\pi( u^{\rho,j}_l-c-u^{\rho,i}_l)-F_i(0)_l= -F_i(0)_l,
$$
thus 
$\|u^{\rho}\|\le  \|F(0)\|/\gamma$. On the other hand, suppose that $u^{\rho,i}_l<0$, we can obtain directly from  \eqref{eq:mono} and the non-negativity of $\pi$ that
$$
\gamma (0-u^{\rho}_l)\le F_i(0)_l-F_i(u^{\rho})_l\le F_i(0)_l,
$$
hence $\|u^{\rho}\|\le  \|F(0)\|/\gamma$, which leads us to the desired estimate.
\end{proof}

Now we are ready to conclude the well-posedness of the penalized problem \eqref{eq:penalty}. The following lemma has been proved in \cite[Theorem 5.3.9]{ortega2000}, which is of crucial importance for the existence of solutions to the penalized equations.

\begin{Lemma}\l{lemma:homeo}
Suppose that $F:\R^n\to \R^n$ is continuously differentiable on $\R^n$, and $\nabla F(x)$ is nonsingular for all $x\in \R^n$. Then $F$ is  a homeomorphism from $\R^n$ onto $\R^n$ if and only if $\lim_{\|x\|\to \infty}\|F(x)\|=\infty$.
\end{Lemma}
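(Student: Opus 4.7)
The statement is a version of Hadamard's global invertibility theorem, and I would split the proof along the two directions of the equivalence.

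The forward direction (homeomorphism $\Rightarrow$ coercivity) is essentially a compactness argument. Assume $F$ is a homeomorphism with continuous inverse $F^{-1}$. If there existed a sequence $\{x_k\}$ with $\|x_k\|\to\infty$ while $\|F(x_k)\|$ stayed bounded, I could pass to a subsequence along which $F(x_k)\to y$ in $\R^n$. Applying $F^{-1}$ and using its continuity would yield $x_k\to F^{-1}(y)$, contradicting $\|x_k\|\to\infty$. Hence $\|F(x)\|\to\infty$ as $\|x\|\to\infty$.

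For the converse, the plan rests on three ingredients. First, the nonsingularity of $\nabla F(x)$ everywhere together with the inverse function theorem shows that $F$ is a local $C^1$-diffeomorphism at each point; in particular, $F(\R^n)$ is open and $F$ is locally injective. Second, the coercivity assumption makes $F$ a proper map: for any compact $K\subset\R^n$ the preimage $F^{-1}(K)$ is closed (by continuity of $F$) and bounded (otherwise there would be $x_k\in F^{-1}(K)$ with $\|x_k\|\to\infty$ but $F(x_k)$ confined to $K$, violating coercivity), hence compact. Combining local injectivity with properness, each fibre $F^{-1}(y)$ is compact and discrete, hence finite.

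Third, I would use these two properties to conclude that $F$ is a covering map of $\R^n$ over $\R^n$, and then invoke simple connectedness of the target. Surjectivity follows because $F(\R^n)$ is open (local diffeomorphism) and closed (properness: if $F(x_k)\to y$, properness extracts a convergent subsequence $x_k\to x$ with $F(x)=y$), so it is all of $\R^n$. For injectivity, the standard argument is that the cardinality $\#F^{-1}(y)$ is locally constant in $y$ (since $F$ is a proper local homeomorphism, every $y$ has a neighborhood evenly covered by disjoint neighborhoods of its preimages), hence constant on the connected space $\R^n$; picking any point $y_0$ in the open image of a sufficiently small neighborhood on which $F$ is a diffeomorphism and using properness to rule out other preimages fixes this common value to be $1$. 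Equivalently, one can argue via path-lifting: any loop in $\R^n$ is null-homotopic, so lifts uniquely to a loop in $\R^n$, forcing the covering to be trivial.

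The main obstacle is the last step, namely promoting the combination ``local homeomorphism $+$ proper'' to ``covering map,'' and then to ``global homeomorphism.'' The even-covering property requires separating the finitely many preimages of a point by disjoint open sets whose images agree; this uses compactness of the fibre in an essential way, and is where properness enters decisively. Once this covering-space structure is established, the conclusion follows from the elementary topological fact that every covering of a simply connected space is trivial.
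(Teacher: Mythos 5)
The paper does not actually prove this lemma: it is quoted verbatim from Ortega and Rheinboldt (Theorem 5.3.9 of \cite{ortega2000}), so there is no in-paper argument to compare against. Your proposal supplies a genuine proof, and it is the classical Hadamard--Caccioppoli argument. The forward direction via continuity of $F^{-1}$ is correct (and does not even use the Jacobian hypothesis), and for the converse the chain ``nonsingular Jacobian $\Rightarrow$ local diffeomorphism'', ``coercive $\Rightarrow$ proper'', ``proper local homeomorphism $\Rightarrow$ covering map'', ``covering of the simply connected space $\R^n$ with connected total space $\Rightarrow$ homeomorphism'' is a complete and standard route. It is essentially the same mechanism as in Ortega--Rheinboldt, who phrase the key step as a continuation (path-lifting) property of norm-coercive local homeomorphisms rather than in covering-space language.

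One step is wrong as literally written: you cannot fix the common sheet number to be $1$ by ``picking $y_0$ in the image of a small neighborhood on which $F$ is a diffeomorphism and using properness to rule out other preimages'' --- a diffeomorphism near one preimage says nothing about preimages elsewhere, and properness only makes the fibre finite, not a singleton. The correct closing move is the one you state right afterwards: a covering of a simply connected, locally path-connected base is trivial, so the total space is a disjoint union of sheets each mapped homeomorphically, and connectedness of the domain $\R^n$ forces exactly one sheet. With that sentence doing the work, and with the even-covering verification via properness spelled out (you correctly identify this as the crux), the proof is sound.
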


\begin{Theorem}\l{thm:wp_p}
For any given penalty parameter $\rho\ge 0$ and switching cost $c\ge 0$, Problem \ref{pro:penalty} admits a unique solution $u^{\rho}$ satisfying $\|u^{\rho}\|\le  \|F(0)\|/\gamma$.
\end{Theorem}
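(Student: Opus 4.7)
Uniqueness and the bound $\|u^\rho\|\le \|F(0)\|/\gamma$ are immediate from Proposition \ref{prop:comparison_p} and Lemma \ref{lem:bound_p}, so only existence requires work. My plan is to apply the homeomorphism criterion of Lemma \ref{lemma:homeo} to $G^\rho$; since $F$ and $\pi$ are only continuous, this cannot be done directly, and I will first regularize both of them, produce a solution of the smoothed equation, and then pass to the limit---this realizes the ``smoothing the monotone systems'' strategy advertised in the introduction.

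I will fix a non-negative $C_c^\infty$ mollifier on $\R^{N\t d}$ and a one-sided $C_c^\infty$ mollifier on $\R$ supported in $[0,\eps]$, and define $F_\eps$ and $\pi_\eps$ by convolution. Then $F_\eps,\pi_\eps\in C^\infty$, $\pi_\eps$ is non-decreasing with $\pi_\eps\equiv 0$ on $(-\infty,0]$ and $\pi_\eps>0$ on $(0,\infty)$, and $F_\eps\to F$, $\pi_\eps\to\pi$ uniformly on bounded sets. The crucial observation is that \eqref{eq:mono} is translation-invariant in the pair $(u,v)$: the hypothesis $u^i_l-v^i_l=\max_{j,k}(u^j_k-v^j_k)\ge 0$ is preserved under $(u,v)\mapsto(u-z,v-z)$, so integrating the shifted inequalities $F_i(u-z)_l-F_i(v-z)_l\ge \gamma(u^i_l-v^i_l)$ against the mollifier yields \eqref{eq:mono} for $F_\eps$ with the same constant $\gamma$. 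Let $G^\rho_\eps$ denote the penalized operator built from $F_\eps$ and $\pi_\eps$.

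I then verify the three hypotheses of Lemma \ref{lemma:homeo} for $G^\rho_\eps$. Smoothness is immediate, and coercivity is obtained by repeating the proof of Lemma \ref{lem:bound_p} with $G^\rho_\eps(x)=y$ in place of $0$: at a coordinate $(i,l)$ where $|x^i_l|=\|x\|$, the penalty contribution either vanishes (when $x^i_l\ge 0$, because each argument $x^j_l-c-x^i_l$ is $\le -c\le 0$ and $\pi_\eps\equiv 0$ there) or is non-negative (when $x^i_l<0$), and the preserved monotonicity of $F_\eps$ with $v=0$ or $u=0$ produces $\|x\|\le (\|F_\eps(0)\|+\|y\|)/\gamma$, hence $\|G^\rho_\eps(x)\|\to\infty$ as $\|x\|\to\infty$. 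The delicate hypothesis is the nonsingularity of $\nabla G^\rho_\eps(x)$: if $\nabla G^\rho_\eps(x)h=0$ with $h\ne 0$, after replacing $h$ by $-h$ I may assume $\max_{j,k}h^j_k>0$ and pick $(i,l)$ attaining this maximum; linearizing \eqref{eq:mono} for $F_\eps$ along $u=x+th$, $v=x$ as $t\to 0^+$ gives $(\nabla F_\eps(x)h)^i_l\ge \gamma h^i_l>0$, while a direct computation shows that the penalty part of $(\nabla G^\rho_\eps(x)h)^i_l$ equals $\rho\sum_{j\ne i}\pi'_\eps(x^j_l-c-x^i_l)(h^i_l-h^j_l)\ge 0$ by the maximality of $h^i_l$ and $\pi'_\eps\ge 0$; adding the two contradicts $(\nabla G^\rho_\eps(x)h)^i_l=0$.

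Lemma \ref{lemma:homeo} then delivers a unique zero $u^\rho_\eps$ of $G^\rho_\eps$, satisfying $\|u^\rho_\eps\|\le \|F_\eps(0)\|/\gamma$ by the smoothed analogue of Lemma \ref{lem:bound_p}, which is uniform in $\eps$ since $F_\eps(0)\to F(0)$. I then extract a convergent subsequence and invoke the uniform convergence $F_\eps\to F$, $\pi_\eps\to\pi$ on bounded sets to conclude $G^\rho(u^\rho)=0$ in the limit. The main obstacle in the plan is the Jacobian nonsingularity: condition \eqref{eq:mono} is one-sided and active only at coordinates where $u-v$ attains a non-negative maximum, which gives no direct matrix-theoretic structure to $\nabla F$; it is precisely the pairing of a carefully chosen test direction---linearization at an index where $h$ attains its maximum---with the non-negativity of the penalty Jacobian contribution on that direction that produces the strict positivity needed for the contradiction.
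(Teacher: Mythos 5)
Your proof is correct and follows essentially the same route as the paper: mollify to gain smoothness, verify the nonsingularity and coercivity hypotheses of Lemma \ref{lemma:homeo} using the monotonicity constant $\gamma$ preserved under the (translation-invariant) mollification, and pass to the limit along a bounded subsequence. The only cosmetic difference is that the paper mollifies the composite operator $G^\rho$ directly, so the penalty term's Jacobian never needs separate treatment, whereas you mollify $F$ and $\pi$ individually and control the penalty contribution to $\nabla G^\rho_\eps$ by hand via $\pi'_\eps\ge 0$ and the maximality of $h^i_l$; both arguments are valid.
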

\begin{proof}
The uniqueness and the a priori bound have been established in Proposition \ref{prop:comparison_p} and Lemma \ref{lem:bound_p} respectively. Now we shall  prove the existence of solutions by approximating the penalized equation $G^\rho$ with a sequence of smooth equations.

Consider a family of smooth functions $\delta_m:\R^{N\t d}\to (0,\infty)$  supported in $B(0,1/m)$ with unit mass,  we define the smooth functions $G^{\rho,m}\coloneqq G^\rho \ast \delta_m$, where the convolution is applied elementwise. 
The continuity of $G^\rho$ implies that $G^{\rho,m}$ converges to $G^\rho$ uniformly on compact sets as $m\to \infty$. Moreover, one can easily  deduce from \eqref{eq:mono_p} and the properties of mollifiers $(\delta_m)_{m\in \N}$  that both $G^\rho$  and $(G^{\rho,m})_{m\in \N}$ satisfy the monotonicity condition \eqref{eq:mono} with the same constant $\gamma$.

For any given $m\in \N$, we shall now  apply Lemma \ref{lemma:homeo} to establish that $G^{\rho,m}$ is a homeomorphism from $\R^{Nd}$ onto $\R^{Nd}$, which implies the equation $G^{\rho,m}=0$ has a solution. More precisely, we shall show (1) the Jacobian matrix $\nabla G^{\rho,m}(u)$ is nonsingular for any given $u\in \R^{Nd}$ and (2) $\lim_{\|u\|\to \infty}\|G^{\rho,m}(u)\|=\infty$. To prove (1), suppose $\nabla G^{\rho,m}(u)x=0$ for some $u,x\in \R^{Nd}$ and let $|x^i_l|=\|x\|\ge 0$ for some $i\in \cI$ and $l\in\cN$.  If $x^i_l \ge 0$, we can deduce from the differentiability and monotonicity of $G^{\rho,m}$ that 
$$
\gamma(hx^i_l-0)\le G_i^{\rho,m}(u+hx)_l-G^{\rho,m}_i(u)_l-h(\nabla G_i^{\rho,m}(u)x)_l=\cO(h^2),\q \textnormal{as $h\to 0$},
$$
which implies $\|x\|=0$. The same conclusion can be drawn for the case with $x^i_l \le 0$, which implies $x=0$ and consequently the non-singularity of $\nabla G^{\rho,m}(u)$. To prove (2), let $u\in \R^{Nd}$ and $|u^i_l|=\|u\|$ for some $i\in \cI$ and $l\in\cN$. If $u^i_l \ge 0$, we can obtain from the monotonicity of $G^{\rho,m}$ that 
$$
\|G^{\rho,m}(u)\|\ge G_i^{\rho,m}(u)_l\ge \gamma(u^i_l)+G_i^{\rho,m}(0)_l\ge  \gamma \|u\|-\|G^{\rho,m}(0)\|,
$$
where the same estimate can be derived similarly for the case $u^i_l \le 0$. Therefore, we can conclude the existence of a solution $u^{\rho,m}$ to $G^{\rho,m}=0$. Since $G^{\rho,m}$ satisfies  \eqref{eq:mono} with the same constant $\gamma$, one can deduce from Lemma \ref{lem:bound_p} and the continuity of $G^{\rho,m}$ that its solution is  uniformly bounded, i.e., $\|u^{\rho,m}\|\le\|G^{\rho,m}(0)\|/\gamma\le L$ independent of $m\in \N$. 

Lastly, let $(u^{\rho,m_k})_{k\in \N}$ be a convergent subsequence of $(u^{\rho,m})_{m\in \N}$ with a limit $u^\rho$. 
Note that 
\begin{align*}
|G^\rho(u^\rho)-G^{\rho,m_k}(u^{\rho,m_k})|\le |G^\rho(u^\rho)-G^{\rho}(u^{\rho,m_k})|
+|G^{\rho}(u^{\rho,m_k})-G^{\rho,m_k}(u^{\rho,m_k})|\to 0,
\end{align*}
as $k\to \infty$, due to the continuity of $G^\rho$ and the uniform convergence  (on compact sets) of $G^{\rho,m}$  to $G^\rho$.
Therefore, $u^\rho$ is a solution of the penalized equation \eqref{eq:penalty} $G^\rho=0$.
%
\end{proof}

We end this section  with the following monotone convergence result of the  penalty approximations.
\begin{Theorem}\l{thm:mono_conv}
For any fixed switching cost $c\ge 0$, the solution to Problem \ref{pro:penalty}  converges monotonically from below to 
a function $u\in \R^{N\t d}$ as the penalty parameter $\rho\to \infty$. Moreover, $u$ solves Problem \ref{pro:switching} if the  switching cost $c$ is positive.
\end{Theorem}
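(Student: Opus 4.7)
The plan is to prove the theorem in three stages: (i) show that the family $\{u^\rho\}_{\rho\ge 0}$ is monotone nondecreasing in $\rho$; (ii) pass to a pointwise limit using the $\rho$-independent bound from Lemma \ref{lem:bound_p}; (iii) verify that the limit satisfies both inequalities that define Problem \ref{pro:switching} together with the complementarity $\min(F_i(u),u^i-\cM_i u)=0$.

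For (i), fix $0\le \rho_1<\rho_2$ and let $u^{\rho_1},u^{\rho_2}$ be the corresponding solutions of Problem \ref{pro:penalty}. Because $\pi\ge 0$, the function $u^{\rho_1}$ is a subsolution of the $\rho_2$-penalized equation,
\[
F_i(u^{\rho_1}) - \rho_2 \sum_{j\not=i} \pi(u^{\rho_1,j} - c - u^{\rho_1,i})
= -(\rho_2 - \rho_1)\sum_{j\not=i}\pi(u^{\rho_1,j} - c - u^{\rho_1,i}) \le 0,
\]
so Proposition \ref{prop:comparison_p} gives $u^{\rho_1}\le u^{\rho_2}$. Together with the uniform bound $\|u^\rho\|\le \|F(0)\|/\gamma$ from Lemma \ref{lem:bound_p}, this establishes step (ii): $u^\rho$ converges pointwise monotonically from below to some $u\in \R^{N\t d}$.

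For (iii), assume $c>0$. The relation $F_i(u^\rho)=\rho\sum_{j\not=i}\pi(u^{\rho,j}-c-u^{\rho,i})\ge 0$ and the continuity of $F$ immediately yield $F_i(u)\ge 0$. The uniform bound on $u^\rho$ and the continuity of $F$ also produce a uniform bound on $F_i(u^\rho)$, so dividing by $\rho$ and letting $\rho\to\infty$ forces $\pi(u^j-c-u^i)=0$ for every $j\not=i$; since $\pi$ is strictly positive on $(0,\infty)$, we deduce $u^j-c-u^i\le 0$, i.e.\ $u^i-\cM_i u\ge 0$.

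The complementarity condition is the place I expect to be the main obstacle, since both terms inside the minimum are only known to be non-negative in the limit. Fix $i\in\cI$ and $l\in\cN$ and consider the non-trivial case $u^i_l-(\cM_i u)_l>0$, i.e.\ $u^j_l-c-u^i_l<0$ strictly for every $j\not=i$. The pointwise convergence $u^{\rho,k}\to u^k$ gives $u^{\rho,j}_l-c-u^{\rho,i}_l<0$ for all $j\not=i$ once $\rho$ is large, and since $\pi$ vanishes on $(-\infty,0]$ the penalty term collapses to zero; hence $F_i(u^\rho)_l=0$ for such $\rho$, and continuity yields $F_i(u)_l=0$. At indices where $u^i_l=(\cM_i u)_l$ the minimum is trivially zero. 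The sign structure of $\pi$ (vanishing on $(-\infty,0]$, strictly positive on $(0,\infty)$) is thus used twice and in essential ways: once to force the limit onto the admissible set, and once to exhibit $F_i(u)_l=0$ on the interior of the continuation region.
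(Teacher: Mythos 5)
Your proposal is correct and follows essentially the same route as the paper: monotonicity in $\rho$ via the comparison principle of Proposition \ref{prop:comparison_p} (you phrase $u^{\rho_1}$ as a subsolution of the $\rho_2$-equation, the paper equivalently views $u^{\rho_1}$ as a supersolution of the $\rho_2$-equation for $\rho_1\ge\rho_2$), the uniform bound of Lemma \ref{lem:bound_p} to extract the limit, the bound $\sum_{j\ne i}\pi(u^{\rho,j}_l-c-u^{\rho,i}_l)\le C/\rho$ to force feasibility and $F_i(u)\ge 0$, and the strict-inequality/large-$\rho$ argument to get $F_i(u)_l=0$ on the continuation region. No gaps.
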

\begin{proof}
It is straightforward to verify that if $u^{\rho_1}$ satisfies \eqref{eq:penalty} with the parameter $\rho_1$ and $\rho_1\ge \rho_2\ge 0$, then $G^{\rho_2}(u^{\rho_1})\ge 0$. Hence one can deduce from Proposition \ref{prop:comparison_p} that $u^{\rho_1}\ge u^{\rho_2}$, which together with Lemma \ref{lem:bound_p} implies $u^{\rho}$ converges to some function $u\in \R^{N\t d}$ as $\rho\to \infty$. Owing to the fact that the solution of \eqref{eq:switching} is unique for positive switching cost, it suffices to show $u$ solves Problem \ref{pro:switching}.

Let $i\in \cI$ be fixed. Since $\|u^{\rho}\|\le \|F(0)\|/\gamma$, we see  that 
$\sum_{j\not =i}\pi( u^{ \rho,j}_l-c-u^{ \rho,i}_l)\le C/\rho$ for all  $l\in \cN$, with a constant $C$ defined as:
\bb\l{eq:C}
C=\sup_{\|u\|\le  \|F(0)\|/\gamma} \|F(u)\|<\infty,
\ee
which is finite due to the continuity of $F$. 
Hence the limiting function $u$ satisfies $ u^{ i}\ge \max_{j\not=i}(u^{ j}-c)$ and $F_i(u)=\lim_{\rho\to\infty}F_i(u^\rho)\ge 0$.
Moreover, suppose $u^{i}_l -(\cM_iu)_l>0$ at the index $l\in \cN$, we can deduce that
$F_i(u^\rho)_l=0$ for all large enough $\rho$,
which further implies $F_i(u)_l= 0$ and completes our proof.
\end{proof}
\begin{Remark}\l{rmk:wp}
We point out that, unlike  Problem  \ref{pro:switching}, the well-posedness of Problem \ref{pro:penalty},  and the monotone convergence of their solutions  hold for any non-negative switching cost $c$, which enables us to study penalty schemes with zero switching cost (see Section \ref{sec:error_zero}).

Moreover, based on the penalized equations, Theorems \ref{thm:wp_p} and \ref{thm:mono_conv} explicitly construct the  solution to Problem \ref{pro:switching}, which is uniformly bounded by $\|F(0)\|/\gamma$ for all  positive switching costs. 
\end{Remark}

\section{Penalization errors for  positive switching cost}\l{sec:error_positive}

In this section, we shall proceed to analyze the convergence rate of the penalty approximation for Problem \ref{pro:switching} with a fixed positive switching cost. As discussed in Section  \ref{sec:introduction}, it is not easy to construct a supersolution of Problem \ref{pro:switching} from the solution of Problem \ref{pro:penalty} due to the non-diagonal dominance of the interconnected obstacles. We shall overcome this difficulty by regularizing the obstacles and establish the  convergence  rates of the penalty approximations with respect to the penalty parameter.

In order to obtain error estimates of the regularization procedures, we  impose the following concavity condition on the monotone system:
\begin{Assumption}\l{assum:concave}
The function $F$ in Problem \ref{pro:switching} is concave in the sense that: for any given $i\in\cI$, $u,v\in \R^{N\t d}$, $\theta\in [0,1]$, we have $F_i(\th u+(1-\th)v)\ge \th F_i(u)+(1-\th)F_i(v)$.
\end{Assumption}

Assumption \ref{assum:concave}  will only be used in Section \ref{sec:error_positive} to quantify the regularization errors (not for the well-posedness or the monotone convergence of the regularization procedures).
It is well-known that a concave function can be equivalently represented as the infimum of a family of affine functions, i.e., $F_i(u)=\inf_{\a\in \cA_i}B_i(\a)u-b_i(\a)$ for some set $\cA_i$ and coefficients $B_i:\cA_i\to \R^{Nd\t Nd}$ and $b_i:\cA_i\to \R^{Nd}$, hence our error estimates apply to the HJBQVIs studied in \cite{boulbrachene2001fem,seydel2009, azimzadeh2016weakly,ferretti2017,azimazadeh2018}. However, our setting significantly extends the classical HJBQVIs in the following important aspects: (1)  $F_i$ can depend on all  components of the solutions to the switching systems, (2) the control set $\cA_i$ can be non-compact  and coefficients $B_i$, $b_i$ can be discontinuous, (3) $b_i$ does not necessarily have a unique sign.

\subsection{Regularizations of the QVIs}\l{sec:regularization}
In this section we discuss how to approximate Problem \ref{pro:switching} by variational inequalities with diagonally dominant obstacle terms. We shall propose two regularization procedures, namely an iterated optimal stopping approximation and a novel time-marching iteration, and estimate the regularization errors, which are essential for analyzing the  penalization error of Problem \ref{pro:penalty}.

Similar error estimates of  the iterated optimal stopping approximation have been obtained in \cite{bonnans2007,ferretti2017} for continuous (scalar-valued) elliptic HJBQVIs with positive running costs, finite control sets, and sufficiently regular coefficients.  Here, we  relax these conditions and obtain regularization errors for general  discrete monotone systems satisfying Assumption \ref{assum:concave}. The time-marching regularization leads to a more accurate approximation to Problem \ref{pro:switching} than the iterated optimal stopping regularization, especially when the switching cost is small.

Let us start with the iterated optimal stopping approximation (see \cite{seydel2009,ferretti2017} for its applications to the classical HJBQVIs), 
which  approximates Problem \ref{pro:switching} as follows:
find  $u^{0}\in \R^{N\t d}$ satisfying
$F_i(u^{0})=0$, $i\in \cI$, and for each $n\ge 1$, given $u^{n-1}\in \R^{N\t d}$, find $u^{n}\in \R^{N\t d}$ such that $u^{n}=Qu^{n-1}$, where 
  for any given $u\in \R^{N\t d}$, we define $Qu\coloneqq ((Qu)^1,\ldots, (Qu)^d)\in \R^{N\t d}$ to be the quantity which satisfies the following obstacle problem:
\bb\l{eq:switching_itr}
\min(F_i(Qu),\; (Qu)^{i}-\cM_i u)=0,\q i\in \cI.
\ee

%

By extending the arguments in Section \ref{sec:penalty}, one can  show that the above procedure is well-defined. Moreover, it is not difficult to  establish   the following comparison principle for \eqref{eq:switching_itr}: for any fixed $w\in  \R^{N\t d}$, if $u\in  \R^{N\t d}$ satisfies 
$$
\min(F_i(u),\; u^{i}-\cM_i w)\le 0,\q i\in \cI,
$$
and $v\in  \R^{N\t d}$ satisfies 
$$
\min(F_i(v),\; v^{i}-\cM_i w)\ge 0,\q i\in \cI,
$$
then $u\le v$. 
In fact, for any fixed $w\in  \R^{N\t d}$, we  can show that  the system $F_w=(F_{w,i})_{i\in\cI}$ such that $F_{w,i}(u)\coloneqq \min(F_i(u),\; u^{i}-\cM_i w)$, $i\in \cI$, satisfies the monotone condition \eqref{eq:mono} with the constant $\min(\gamma,1)$, which subsequently implies the above comparison principle.
\color{black}

The next result presents some important properties of the  operator $Q$.

\begin{Lemma}\l{lem:Q}
The operator $Q$ is monotone, i.e., $Qu\ge Qv$ provided that $u\ge v$, and satisfies the a priori estimate:
$$
\|Qu\|\le \max(\|F(0)\|/\gamma,\, \|u\|) \q  \fa u\in \R^{N\t d}.
$$
If we further suppose Assumption \ref{assum:concave} holds, then the operator $Q$ is convex.
\end{Lemma}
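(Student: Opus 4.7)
\textbf{Proof plan for Lemma \ref{lem:Q}.} The three claims are essentially corollaries of the comparison principle stated just above the lemma (applied with different frozen obstacles $w$), combined with the monotonicity of $F$ from \eqref{eq:mono} and the concavity Assumption \ref{assum:concave}. I will set up $Qu$ via its defining equation and, for each claim, exhibit the appropriate sub-/super-solution.

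For monotonicity, I would take $u\ge v$ and observe that $\cM_i$ is order preserving (because $\max_{j\ne i}(\cdot-c)$ is), so $\cM_i u\ge \cM_i v$. Then $Qv$ satisfies $\min(F_i(Qv),(Qv)^i-\cM_i u)\le \min(F_i(Qv),(Qv)^i-\cM_i v)=0$; applying the comparison principle for \eqref{eq:switching_itr} (with frozen obstacle $w=u$) against the solution $Qu$ gives $Qv\le Qu$.

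For the a priori bound, I would pick an index $(i,l)$ where $|(Qu)^i_l|=\|Qu\|$ and split on the sign. If $(Qu)^i_l\ge 0$, then at this index the equation gives either $F_i(Qu)_l=0$ or $(Qu)^i_l=(\cM_i u)_l$. In the first case, $(Qu)^i_l=\max_{j,k}(Qu)^j_k-0\ge 0$, so the monotonicity condition \eqref{eq:mono} applied to the pair $(Qu,0)$ yields $\gamma(Qu)^i_l\le F_i(Qu)_l-F_i(0)_l=-F_i(0)_l$, hence $\|Qu\|\le \|F(0)\|/\gamma$. In the second case, $(Qu)^i_l=\max_{j\ne i}(u^j_l-c)\le \|u\|$. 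If $(Qu)^i_l<0$, then $F_i(Qu)_l\ge 0$ and applying \eqref{eq:mono} to the pair $(0,Qu)$ at $(i,l)$ gives $-\gamma(Qu)^i_l\le F_i(0)_l-F_i(Qu)_l\le \|F(0)\|$. Combining the cases yields the claimed bound.

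For convexity under Assumption \ref{assum:concave}, fix $\theta\in[0,1]$ and set $w\coloneqq \theta Qu+(1-\theta)Qv$; the goal is $w\ge Q(\theta u+(1-\theta)v)$, which by the comparison principle for \eqref{eq:switching_itr} (with frozen obstacle $\theta u+(1-\theta)v$) reduces to showing $w$ is a supersolution. Concavity of $F$ gives $F_i(w)\ge \theta F_i(Qu)+(1-\theta)F_i(Qv)\ge 0$. For the obstacle inequality, use the individual bounds $(Qu)^i\ge \cM_i u$, $(Qv)^i\ge \cM_i v$, combined with convexity of $\max_{j\ne i}$, to conclude
\[
w^i\;\ge\; \theta\,\cM_i u+(1-\theta)\,\cM_i v\;\ge\; \cM_i(\theta u+(1-\theta)v).
\]
The main subtlety to watch is orientation: $Q$ is convex even though the underlying nonlinearity is concave, because the obstacle operator $\cM_i$ enters with a compatible (convex) sign on the right-hand side; the proof hinges on lining up concavity of $F$ with convexity of $\max$ in that single supersolution check.
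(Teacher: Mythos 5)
Your proposal is correct and follows essentially the same route as the paper: comparison-principle arguments with the appropriate frozen obstacle for monotonicity and convexity (the paper checks that $Qu$ is a supersolution for the obstacle $\cM_i v$ rather than that $Qv$ is a subsolution for $\cM_i u$, a purely cosmetic difference), and the sign-split adaptation of Lemma~\ref{lem:bound_p} for the a priori bound, which you have merely written out in more detail than the paper does.
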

\begin{proof}
If $u\ge v$, then $-\cM_i u\le -\cM_i v$ due to the monotonicity of $\cM_i$. Thus, we have that
$$
 \min(F_i(Qu),\; (Qu)^{i}-\cM_i v)\ge \min(F_i(Qu),\; (Qu)^{i}-\cM_i u)=0 ,\q i\in \cI,
$$
which together with the comparison principle of \eqref{eq:switching_itr} shows that $Qu\ge Qv$.

For the a priori estimate, we suppose that $|(Qu)^i_l|=\|Qu\|$. 
If $(Qu)^i_l=(\cM_i u)_l$ and $ (\cM_i u)_l\ge 0$, then we have $\|Qu\|=(\cM_i u)_l\le \|u\|$. 
Otherwise, we can adapt the arguments of Lemma \ref{lem:bound_p} to show $\|Qu\|\le \|F(0)\|/\gamma$. Finally, for any given $u,v\in \R^{N\t d}$ and $\theta\in [0,1]$, one can deduce from the concavity of $F_i$ and $-\cM_i$ that $\th Qu+(1-\th)Qv$ is a supersolution to \eqref{eq:switching_itr} with the obstacle $\cM_i(\th u+(1-\th)v)$, hence the 
comparison principle  leads us to the desired result.
\end{proof}

The above lemma directly implies the monotone convergence of the iterates $(u^n)_{n\in \N}$.
\begin{Proposition}\l{prop:iterated_conv}
For any given positive switching cost $c> 0$, the iterates $(u^n)_{n\in \N}$ satisfy $\|u^n\|\le \|F(0)\|/\gamma$ for all $n\in \N$, and converge monotonically from below to 
the solution $u$ of Problem \ref{pro:switching} as $n\to \infty$. 
\end{Proposition}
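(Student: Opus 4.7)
The plan is to establish the uniform bound first via induction, then to prove monotonicity of the iterates, extract a limit in the finite-dimensional space $\R^{N\t d}$, and finally identify the limit with the unique solution of Problem~\ref{pro:switching} guaranteed for $c>0$ by Proposition~\ref{prop:comp_s}.

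For the a priori bound $\|u^n\|\le \|F(0)\|/\gamma$ I would argue by induction on $n$. For the base case $n=0$, since $F(u^0)=0$, choosing $(i,l)$ attaining $|u^{0,i}_l|=\|u^0\|$ and applying the monotonicity~\eqref{eq:mono} in exactly the same manner as in Lemma~\ref{lem:bound_p} (treating separately the cases $u^{0,i}_l\ge 0$ and $u^{0,i}_l<0$) yields $\|u^0\|\le \|F(0)\|/\gamma$. For the inductive step, Lemma~\ref{lem:Q} gives directly
\[
\|u^n\|=\|Qu^{n-1}\|\le \max\!\left(\|F(0)\|/\gamma,\,\|u^{n-1}\|\right)\le \|F(0)\|/\gamma.
\]

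For monotonicity $u^{n-1}\le u^n$ I would initialize by proving $u^0\le u^1$. Because $F_i(u^0)=0$, we have $\min(F_i(u^0),(u^0)^i-\cM_i u^0)\le 0$, regardless of the sign of $(u^0)^i-\cM_i u^0$. Hence $u^0$ is a subsolution of the obstacle problem~\eqref{eq:switching_itr} with obstacle data $\cM_i u^0$, while $u^1=Qu^0$ is by definition its solution; the comparison principle stated immediately before Lemma~\ref{lem:Q} then yields $u^0\le u^1$. For the inductive step, the monotonicity of $Q$ proved in Lemma~\ref{lem:Q} gives $u^n=Qu^{n-1}\le Qu^n=u^{n+1}$.

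Combining the two, $(u^n)_{n\in\N}$ is nondecreasing and uniformly bounded, so it converges to some limit $u^\infty\in\R^{N\t d}$. Passing to the limit $n\to\infty$ in
\[
\min\bigl(F_i(u^n),\,(u^n)^i-\cM_i u^{n-1}\bigr)=0
\]
via the continuity of $F$ and of $\cM_i$ yields $\min(F_i(u^\infty),(u^\infty)^i-\cM_i u^\infty)=0$ for every $i\in\cI$, so $u^\infty$ solves Problem~\ref{pro:switching}. Since $c>0$, Proposition~\ref{prop:comp_s} gives uniqueness, hence $u^\infty=u$. The only mildly delicate point is the initialization $u^0\le u^1$, which hinges on the observation that $F_i(u^0)=0$ alone makes $u^0$ a subsolution of the obstacle problem irrespective of whether the obstacle is active; once this is recognized, the remainder of the argument reduces to direct appeals to Lemma~\ref{lem:Q}, the comparison principle for~\eqref{eq:switching_itr}, and Proposition~\ref{prop:comp_s}.
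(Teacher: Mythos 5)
Your proof is correct and follows essentially the same route as the paper: the bound on $u^0$ via the argument of Lemma~\ref{lem:bound_p}, the uniform bound and monotonicity of the iterates via Lemma~\ref{lem:Q}, and passage to the limit by continuity. The only (immaterial) difference is the initialization $u^0\le u^1$: the paper deduces it from $F(u^0)=0\le F(u^1)$ and the comparison principle of $F$ itself, whereas you invoke the comparison principle for the obstacle problem~\eqref{eq:switching_itr} with $u^0$ as a subsolution — both are valid.
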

\begin{proof}
The bound of $u^0$ follows from Lemma \ref{lem:bound_p} (with $\rho=0$), while  
the uniform bound of $(u^n)_{n\in \N}$ follows from Lemma \ref{lem:Q}.
Moreover, since $F(u^0)=0$ and $F(u^1)\ge 0$ implies $u^1\ge u^0$ by the comparison principle of $F$,
 we can show by an inductive argument and the monotonicity of the operator $Q$ that $(u^n)_{n\in \N}$ monotonically increases to some vector $u$, which solves Problem \ref{pro:switching} due to the continuity of \eqref{eq:switching_itr}.
\end{proof}

Now we proceed to estimate the difference $u-u^{n}$, where $u$ and $u^{n}$ solve Problem \ref{pro:switching} and the equation \eqref{eq:switching_itr}, respectively. We shall first introduce the concept of strict supersolution, which was used in
\cite{ishii1993,seydel2009} to study impulse control problems.
\begin{Definition}\l{def:switching_strict}
 A vector $w\in \R^{N\t d}$ is said to be a strict supersolution of Problem \ref{pro:switching} if there exists  a  constant $\kappa>0$, such that $G_i(w)=\kappa$ for all $i\in \cI$.
\end{Definition}

For any any given $0<\kappa<c$, by applying Theorem \ref{thm:wp_p} to the problem
$$
\min (F_i(u)-\kappa,\; u^i-\cM^\kappa_{i} u)=0, \q \textnormal{with $\cM^\kappa_i u\coloneqq \max_{j\not = i}(u^j-(c-\kappa))$,}
$$
we can show that Problem \ref{pro:switching} admits a unique strict supersolution satisfying the bound $\|w\|\le(\|F(0)\|+\kappa)/\gamma$. For convenience,  we shall assume without loss of generality that  $\|F(0)\|>0$ in the remaining part of this paper, which excludes the trivial case where $0$ is  the unique solution to Problems \ref{pro:switching} and \ref{pro:penalty}.

The next lemma shows a contractive property of the  operator $Q$. 
A similar result has been shown in \cite{seydel2009}  for a classical (continuous in time and space) HJBQVI via a control-theoretic approach. 
Here we shall present a simpler proof for our discrete setting based on the comparison principle, which can be easily extended to other regularization methods. For any given $\kappa\in (0,c)$, we  introduce the following constant $L_\kappa$, which will be used frequently in the subsequent analysis:
\bb\l{eq:L}
L_\kappa\coloneqq (2\|F(0)\|+\kappa)/\gamma.
\ee

\begin{Lemma}\l{lem:contraction}
Suppose Assumption \ref{assum:concave} holds. Let  $w$ be the strict supersolution to Problem \ref{pro:switching} with $\kappa\in (0,c)$. If $u^{n}-u^{n-1}\le \lambda (w-u^{n-1})$ for some $\lambda\in [0,1]$ and $n\in \N$, then
 we have 
$u^{n+1}-u^{n}\le \lambda (1-\mu) (w-u^{n})$ with
\bb\l{eq:mu}
\mu=\min\bigg (1,\f{\gamma \kappa}{2\|F(0)\|+\kappa}\bigg)\in (0,1].
\ee
\end{Lemma}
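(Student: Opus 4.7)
The plan is to build a convex combination of $u^n$ and the strict supersolution $w$ that serves as a supersolution of the obstacle problem defining $u^{n+1}=Qu^n$, and then invoke the comparison principle for \eqref{eq:switching_itr} (recorded just below that display) to deduce $u^{n+1}\le \tilde w$. Set
\begin{equation*}
\tilde w\coloneqq u^n+\lambda(1-\mu)(w-u^n)=(1-\lambda(1-\mu))u^n+\lambda(1-\mu)w.
\end{equation*}
Since this is exactly the upper bound claimed in the lemma, it suffices to verify that $\min(F_i(\tilde w),\,\tilde w^i-\cM_i u^n)\ge 0$ for every $i\in\cI$.

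The $F$-inequality is essentially free from Assumption \ref{assum:concave}: concavity gives $F_i(\tilde w)\ge (1-\lambda(1-\mu))F_i(u^n)+\lambda(1-\mu)F_i(w)\ge 0$, since $u^n=Qu^{n-1}$ satisfies $F_i(u^n)\ge 0$ and $w$ is a strict supersolution with $F_i(w)\ge\kappa$. The obstacle inequality $\tilde w^i_l\ge (u^n)^j_l-c$ for all $j\neq i$ and $l\in\cN$ is the core of the argument. I would upper bound $(u^n)^j_l-c$ by chaining three ingredients: the hypothesis $(u^n)^j_l\le (1-\lambda)(u^{n-1})^j_l+\lambda w^j_l$; the obstacle inequality satisfied at step $n$, namely $(u^n)^i_l\ge (u^{n-1})^j_l-c$; and the strict-supersolution bound $w^j_l-c\le w^i_l-\kappa$, which follows from $w^i-\cM_i w\ge\kappa$. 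Putting these together yields $(u^n)^j_l-c\le (1-\lambda)(u^n)^i_l+\lambda(w^i_l-\kappa)$; subtracting from $\tilde w^i_l$ and simplifying gives
\begin{equation*}
\tilde w^i_l-\bigl((u^n)^j_l-c\bigr)\ge \lambda\bigl(\kappa-\mu(w^i_l-(u^n)^i_l)\bigr)\ge \lambda(\kappa-\mu L_\kappa),
\end{equation*}
where the last inequality uses the a priori bounds $\|u^n\|\le\|F(0)\|/\gamma$ (Proposition \ref{prop:iterated_conv}) and $\|w\|\le(\|F(0)\|+\kappa)/\gamma$, so that $w^i_l-(u^n)^i_l\le L_\kappa$. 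The definition of $\mu$ in \eqref{eq:mu} is tuned precisely so that $\mu L_\kappa\le\kappa$, making the right-hand side non-negative.

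The main obstacle is the obstacle inequality; the $F$-part is a one-line concavity estimate. The delicate point is the interplay between the uniform slack $\kappa$ built into the strict supersolution and the a priori size $L_\kappa$ of the gap $w-u^n$, which together force the exact form of the contraction factor $1-\mu$.
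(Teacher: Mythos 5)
Your proof is correct, but it takes a genuinely different route from the paper's. The paper factors the argument through the operator $Q$ itself: using the convexity and monotonicity of $Q$ established in Lemma \ref{lem:Q}, it first writes $Qu^n\le \lambda Qw+(1-\lambda)u^n$, thereby reducing the claim to the single statement $Qw\le(1-\mu)w+\mu u^n$, which is then verified by checking that $(1-\mu)w+\mu u^n$ is a supersolution of \eqref{eq:switching_itr} with the \emph{fixed} obstacle $\cM_i w$; there the obstacle inequality needs only the strict-supersolution slack $\kappa$ and the a priori bounds, and the induction hypothesis enters only through the $Q$-convexity step. You instead verify directly that the target $\tilde w=u^n+\lambda(1-\mu)(w-u^n)$ is a supersolution of the obstacle problem defining $u^{n+1}=Qu^n$, with obstacle $\cM_i u^n$, and conclude by one application of the comparison principle. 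This bypasses the abstract convexity of $Q$ (and its monotonicity) entirely, at the cost of a more involved obstacle computation that must chain the induction hypothesis, the obstacle condition satisfied at step $n$, and the slack of $w$; your algebra $\tilde w^i_l-((u^n)^j_l-c)\ge\lambda(\kappa-\mu(w^i_l-(u^n)^i_l))\ge\lambda(\kappa-\mu L_\kappa)\ge 0$ checks out, and the bound $w^i_l-(u^n)^i_l\le\|w\|+\|u^n\|\le L_\kappa$ is exactly what the choice \eqref{eq:mu} is designed for. The paper's decomposition has the advantage that the intermediate statement $Qw\le(1-\mu)w+\mu u^n$ is reusable and the supersolution check is shorter; yours is more self-contained, needing only the comparison principle for \eqref{eq:switching_itr} and the concavity of $F$ rather than the derived convexity of $Q$.
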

\begin{proof}
One can deduce from  the  convexity  and monotonicity of the  operator $Q$ that
$$
Qu^{n}\le Q( \lambda w+(1-\lambda) u^{n-1})\le  \lambda Q  w+(1-\lambda) Qu^{n-1}=\lambda Q  w+(1-\lambda) u^{n},
$$
hence it suffices to show $Q  w\le (1-\mu)w+\mu u^{n}$.
Note that for any given $i\in \cI$, we can obtain from the concavity of $F_i$ that $F_i((1-\mu)w+\mu u^{n})\ge 0$, and also for $\mu\in [0,1]$,
\begin{align*}
(1-\mu)w^{i}+\mu u^{n,i}-\cM_i w\ge \kappa-\mu (w^{i}+(u^{n,i})^-)\ge  \kappa-\mu (\|w\|+\|(u^{n})^-\|)\ge 0,
\end{align*}
provided that $\mu\le \f{\kappa}{\|w\|+\|(u^{n})^-\|}$. Since  we have 
$\|w\|+ \|(u^{n})^-\|\le  L_\kappa$ for all $n\in \N$, 
  setting $\mu= \min(1,{\kappa}/{L_\kappa})$ gives us that
$$
\min (F_i((1-\mu)w+\mu u^{n}),\; (1-\mu)w^{i}+\mu u^{n,i}-\cM_i w)\ge 0, \q i\in \cI,
$$  
which  subsequently enables us to conclude
the desired result   from the comparison principle.
\end{proof}

Now we are ready to present the error estimate of the iterated optimal stopping approximation.

\begin{Theorem}\l{thm:iterated_rate}
Suppose Assumption \ref{assum:concave} holds, 
$u$ solves  Problem \ref{pro:switching}, and  $(u^{n})_{n\in\N}$ are recursively defined  by 
the equation \eqref{eq:switching_itr}. 
Then we have for any given $\kappa\in (0,c)$ that
$$
0\le u-u^{n}\le L_\kappa(1-\mu)^n/\mu, \q \fa n\ge 0,\!\footnotemark
$$
where $\mu$ and $L_\kappa$ are defined as in \eqref{eq:mu} and \eqref{eq:L} respectively.
\footnotetext{Here we follow the convention that $0^0=1$.}
\end{Theorem}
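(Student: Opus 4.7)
The lower bound $u - u^n \ge 0$ is immediate from Proposition \ref{prop:iterated_conv}, so the task is the upper bound. My plan is to iterate Lemma \ref{lem:contraction} starting from the first step and telescope.

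First I would fix $\kappa \in (0,c)$ and let $w$ be the associated strict supersolution, which, by Theorem \ref{thm:wp_p} applied to the reduced switching cost $c-\kappa$, exists and satisfies $\|w\| \le (\|F(0)\| + \kappa)/\gamma$. Since $G_i(w) = \kappa > 0$ means $w$ is a (non-strict) supersolution of Problem \ref{pro:switching}, Proposition \ref{prop:comp_s} yields $u \le w$, and combining with the monotonicity of $Q$ (Lemma \ref{lem:Q}) gives $u^n \le u \le w$ for all $n$. In particular $u^1 - u^0 = Qu^0 - u^0 \le w - u^0$, which is precisely the hypothesis of Lemma \ref{lem:contraction} with $\lambda = 1$ at index $n=1$.

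Next I would set up a straightforward induction: if $u^n - u^{n-1} \le (1-\mu)^{n-1}(w - u^{n-1})$, then Lemma \ref{lem:contraction} (applied with $\lambda = (1-\mu)^{n-1} \in [0,1]$) gives $u^{n+1} - u^n \le (1-\mu)^n (w - u^n)$. Since $u^n$ converges monotonically to $u$ by Proposition \ref{prop:iterated_conv}, I would then telescope:
\begin{equation*}
0 \le u - u^n = \sum_{k=n}^{\infty} (u^{k+1} - u^k) \le \sum_{k=n}^{\infty} (1-\mu)^k (w - u^k).
\end{equation*}
Monotonicity of $(u^k)$ ensures $w - u^k \le w - u^n$ (componentwise) for all $k \ge n$, so the right-hand side is bounded by $(w - u^n) \sum_{k=n}^{\infty} (1-\mu)^k = (w - u^n)(1-\mu)^n/\mu$.

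Finally, I would bound $\|w - u^n\|$ by the triangle inequality: the a priori bounds give $\|w\| \le (\|F(0)\| + \kappa)/\gamma$ and $\|u^n\| \le \|F(0)\|/\gamma$ (the latter from Proposition \ref{prop:iterated_conv}), so $\|w - u^n\| \le (2\|F(0)\| + \kappa)/\gamma = L_\kappa$, yielding the claimed estimate. I do not expect a genuine obstacle here: all the real work sits in Lemma \ref{lem:contraction}, and once that contraction is in hand, the theorem follows by a routine induction and geometric summation. The only point that warrants care is verifying the base case $\lambda_0 = 1$, which comes down to noting $u^1 \le u \le w$ so that the right-hand side of the contraction bound is nonnegative and Lemma \ref{lem:contraction}'s hypothesis $\lambda \in [0,1]$ is met.
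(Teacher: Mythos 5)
Your proposal is correct and follows essentially the same route as the paper: establish $u^0\le u^1\le\cdots\le u\le w$ via the comparison principle, iterate Lemma \ref{lem:contraction} from the base case $\lambda=1$, and telescope the geometric series using $\|w-u^k\|\le L_\kappa$. The only cosmetic difference is that you bound $w-u^k$ by $w-u^n$ for $k\ge n$ whereas the paper uses $w-u^0$; both yield the same $L_\kappa$ constant.
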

\begin{proof}
Let  $w$ be the strict supersolution to Problem \ref{pro:switching} with parameter $\kappa$. Since $(u^n)_{n\in\N}$ converge to $u$ monotonically from below, the comparison principle for Problem \ref{pro:switching} applied to $u$ and $w$ implies that $u^0\le u^1\le \cdots\le u\le w$.
Hence $u^1-u^0\le \lambda(w-u^0)$ with $\lambda=1$, which along with Lemma \ref{lem:contraction} gives that $u^2-u^1\le \lambda(1-\mu)(w-u^1)$. Inductively, we have
$$
0 \le u^{n+1} -u^{n}\le (1-\mu)^n (w- u^{n})\le (1 -\mu)^n(w-u^{0}), \q n\ge 0.
$$
Now summing the above inequality and employing $w-u^0\le L_\kappa$, we obtain that
\begin{equation*}
0\le u-u^n=\sum_{k=n}^\infty u^{k+1}-u^k\le (w-u^{0})\sum_{k=n}^\infty (1 -\mu)^k\le L_\kappa (1-\mu)^n/\mu,  \q n\ge 0,
\end{equation*}
which gives us the desired estimate and completes the proof.
\end{proof}
\begin{Remark}
Similar geometric convergence rates have been establish in \cite{boulbrachene2001fem,ferretti2017} for classical HJBQVIs, i.e., $F_i(u)=\inf_{\a\in\cA_i} B_i(\a)u^i-b_i(\a)$ for all $i\in\cI$, under the assumptions that $\cA_i$ is a compact (or finite) set and $b_i(\a)\ge 0$ for all $\a\in \cA_i$. Here we remove these restrictions.

Theorem \ref{thm:iterated_rate} suggests the iterated optimal stopping  only gives a good approximation to Problem \ref{pro:switching} for sufficiently large  switching cost $c$. For small enough switching cost $c$, we have $1-\mu\approx 1-\gamma c/(2\|F(0)\|)$, which converges to $1$ as $c\to 0$.

\end{Remark}

Due to the slow convergence rate of the iterated optimal stopping approximation for small switching cost, let us now discuss another regularization method, called the time-marching iteration (see \cite{ito2001}), which introduces an additional pseudo-time parameter $\eps$ to the interconnected obstacle, and gives an accurate approximation to Problem \ref{pro:switching} even for small switching cost.

For any given parameter $\eps>0$, the time-marching iteration is given as follows:
find  $u^{0}\in \R^{N\t d}$ satisfying
$F_i(u^{0})=0$, $i\in \cI$, and for each $n\ge 1$, given $u^{n-1}\in \R^{N\t d}$, find $u^{n}\in \R^{N\t d}$ such that $u^{n}=Tu^{n-1}$, where 
 for any given $u\in \R^{N\t d}$, we define $Tu\coloneqq ((Tu)^1,\ldots, (Tu)^d)\in \R^{N\t d}$ to be the quantity which satisfies the following obstacle problem:
\bb\l{eq:pseudo_itr}
\min\big(F_i(Tu),\; (Tu)^{i}-\cM_i (Tu)+\eps((Tu)^i-u^i)\big)=0,\q i\in \cI.
\ee
%

The  operator $T$ enjoys analogue properties as the operator $Q$, i.e., Lemma \ref{lem:Q} and Proposition \ref{prop:iterated_conv}, whose proofs are similar and details are omitted. Moreover, similar to \eqref{eq:switching_itr}, we can  show \eqref{eq:pseudo_itr} admits the following comparison principle: for any fixed $w\in  \R^{N\t d}$, if $u\in  \R^{N\t d}$ satisfies 
$$
\min\big(F_i(u),\; u^{i}-\cM_i u+\eps(u^i-w^i)\big)\le 0,\q i\in \cI.
$$
and $v\in  \R^{N\t d}$ satisfies 
$$
\min\big(F_i(v),\; v^{i}-\cM_i v+\eps(v^i-w^i)\big)\ge 0,\q i\in \cI.
$$
then $u\le v$. 

The next theorem presents the convergence rate of the time-marching iteration.

\begin{Theorem}\l{thm:pseudo_rate}
Suppose Assumption \ref{assum:concave} holds, 
$u$ solves  Problem \ref{pro:switching}, and  $(u^{n})_{n\in\N}$ are recursively defined  by the equation \eqref{eq:pseudo_itr} with the pseudo-time parameter $\eps>0$. Then we have for any  $\kappa\in (0,c)$ that
$$
0\le u-u^{n}\le L_\kappa(1-\mu)^n/\mu, \q  n\ge 0,
$$
where  $L_\kappa$ is defined as in  \eqref{eq:L} and $\mu={\kappa}/{(\kappa+\eps L_\kappa)}$.
\end{Theorem}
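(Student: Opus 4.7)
The plan is to mirror the argument of Theorem \ref{thm:iterated_rate}, with the central task being to establish a contraction lemma analogous to Lemma \ref{lem:contraction} but for the operator $T$. As the excerpt notes, $T$ inherits monotonicity, convexity (under Assumption \ref{assum:concave}), an a priori bound, and a comparison principle from the same arguments used for $Q$, and the iterates $(u^n)_{n\in\N}$ converge monotonically from below to the solution $u$ of Problem \ref{pro:switching}. Granting the contraction, the geometric sum bound then goes through verbatim: starting from $u^0\le u^1\le\cdots\le u\le w$ (with $w$ the strict supersolution corresponding to $\kappa\in(0,c)$), one has $u^1-u^0\le 1\cdot(w-u^0)$, induction yields $u^{n+1}-u^n\le(1-\mu)^n(w-u^0)$, and summing together with $w-u^0\le L_\kappa$ gives $0\le u-u^n\le L_\kappa(1-\mu)^n/\mu$.

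The contraction claim reads: if $u^{n}-u^{n-1}\le\lambda(w-u^{n-1})$ for some $\lambda\in[0,1]$, then $u^{n+1}-u^{n}\le\lambda(1-\mu)(w-u^{n})$ with $\mu=\kappa/(\kappa+\eps L_\kappa)$. Exactly as in the proof of Lemma \ref{lem:contraction}, combining monotonicity and convexity of $T$ with $u^n\le\lambda w+(1-\lambda)u^{n-1}$ yields $u^{n+1}=Tu^n\le\lambda Tw+(1-\lambda)u^n$, so it suffices to show $Tw\le(1-\mu)w+\mu u^n$. By the comparison principle for the pseudo-time obstacle problem (with previous iterate $w$), this reduces to verifying that $z\coloneqq(1-\mu)w+\mu u^n$ is a supersolution, i.e.
\begin{equation*}
\min\bigl(F_i(z),\;z^i-\cM_i z+\eps(z^i-w^i)\bigr)\ge 0,\qquad i\in\cI.
\end{equation*}

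The inequality $F_i(z)\ge 0$ is immediate from concavity of $F_i$ combined with $F_i(w)\ge\kappa$ and $F_i(u^n)\ge 0$. For the obstacle term, the convexity of $\cM_i$ (a max of affine functions) gives
\begin{equation*}
z^i-\cM_i z+\eps(z^i-w^i)\ge (1-\mu)(w^i-\cM_i w)+\mu\bigl[u^{n,i}-\cM_i u^n+\eps(u^{n,i}-w^i)\bigr].
\end{equation*}
The first bracket is bounded below by $\kappa$ since $w$ is a strict supersolution. The key rewrite uses that $u^n=Tu^{n-1}$ satisfies $u^{n,i}-\cM_i u^n+\eps(u^{n,i}-u^{n-1,i})\ge 0$, so
\begin{equation*}
u^{n,i}-\cM_i u^n+\eps(u^{n,i}-w^i)=\bigl[u^{n,i}-\cM_i u^n+\eps(u^{n,i}-u^{n-1,i})\bigr]+\eps(u^{n-1,i}-w^i)\ge -\eps L_\kappa,
\end{equation*}
since $\|w-u^{n-1}\|\le\|w\|+\|u^{n-1}\|\le L_\kappa$. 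Hence $z^i-\cM_i z+\eps(z^i-w^i)\ge(1-\mu)\kappa-\mu\eps L_\kappa$, which vanishes precisely at $\mu=\kappa/(\kappa+\eps L_\kappa)$.

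The main obstacle is the bookkeeping for the extra pseudo-time penalty: the mixed term $\eps(u^{n,i}-w^i)$ must be turned into something controllable by exploiting that $u^n$ satisfies the obstacle constraint with previous iterate $u^{n-1}$, not $w$, and then absorbing the discrepancy $\eps(u^{n-1,i}-w^i)$ via the uniform bound $L_\kappa$. Once this rewrite is identified, the rest is a direct transcription of the iterated-optimal-stopping argument, and the value $\mu=\kappa/(\kappa+\eps L_\kappa)$ (which stays bounded away from $0$ as $c\to 0$ provided $\eps$ is chosen commensurate with $\kappa$) quantifies the regularizing effect of the pseudo-time term.
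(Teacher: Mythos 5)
Your proposal is correct and follows essentially the same route as the paper: both reduce the theorem to showing $Tw\le(1-\mu)w+\mu u^{n}$ via the comparison principle, and both handle the pseudo-time term by splitting $\eps(u^{n,i}-w^i)$ into the part controlled by the constraint satisfied by $u^{n}=Tu^{n-1}$ plus the discrepancy $\eps(u^{n-1,i}-w^i)$, bounded by $\eps L_\kappa$, which yields $\mu=\kappa/(\kappa+\eps L_\kappa)$ exactly as in the paper.
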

\begin{proof}
Let  $w$ be the strict supersolution to Problem \ref{pro:switching} with parameter $\kappa$. 
Following the proofs of Lemma \ref{lem:contraction} and Theorem \ref{thm:iterated_rate}, we see it is essential to obtain $\mu\in(0,1]$ such that 
$T  w\le (1-\mu)w+\mu u^{n}$ for all $n\in \N$, which leaves us to show that for suitable $u$ we have
\begin{align*}
&(1-\mu)w^i+\mu u^{n,i}-\cM_i[(1-\mu)w+\mu u^{n}]+\eps((1-\mu)w^i+\mu u^{n,i}-w^i)\\
\ge \,&(1-\mu)(w^i-\cM_i w)+\mu [u^{n,i}-\cM_i u^{n}+\eps(u^{n,i}-u^{n-1,i})]-\eps \mu (w^i-u^{n-1,i})\\
\ge \,&(1-\mu)\kappa-\eps \mu (\|w\|+\|(u^{n-1})^-\|)\ge (1-\mu)\kappa-\eps \mu L_\kappa\ge 0,
\end{align*}
for $L_\kappa$ defined as in \eqref{eq:L}. Thus by setting $\mu={\kappa}/{(\kappa+\eps L_\kappa)}$, we see the above inequality holds, and one can deduce the desired result following similar arguments as the proof of Theorem \ref{thm:iterated_rate}.
\end{proof}

\begin{Remark}\l{rmk:pseudo}
Through the choice of the pseudo-time parameter $\eps$,  the time-marching iteration  gives a more accurate approximation to Problem \ref{pro:switching}  than the iterated optimal stopping approximation for small switching cost $c$. In fact, it holds for the time-marching iteration that
$$
1-\mu=1-\f{\kappa}{\kappa+\eps  (2\|F(0)\|+\kappa)/\gamma}=1-\f{1}{1+  2\eps\|F(0)\|/(\kappa\gamma)+\eps/\gamma}.
$$
Therefore, taking $c\to 0$ and $\eps\to 0$ such that $\eps/c\to 0$, we get $1-\mu\to 0$.
However, as we shall see in Section \ref{subsec:error_positive}, the error of the penalty approximation to \eqref{eq:pseudo_itr} grows proportionally to $1/\eps$, hence after minimizing over $\eps$, both the iterated optimal stopping approximation and the time-marching iteration lead to the same error estimate for Problem \ref{pro:penalty}.
\end{Remark}

\subsection{Convergence order of penalty methods}\l{subsec:error_positive}

In this section, we shall use the regularization procedures proposed in Section \ref{sec:regularization} to demonstrate that for fixed positive switching cost and a penalty function with degree $\sigma>0$, the approximation error of Problem \ref{pro:penalty} is bounded above by the quantity $C_0\rho^{-\sigma}\ln\rho$ for some constant $C_0$, which depends only on the function $F$ and is independent of the number of switching regimes $d$. Since both regularization procedures lead to the same error estimate (see Remark \ref{rmk:pseudo}), we shall focus on the regularization by the iterated optimal stopping, and only outline the essential results for  the time-marching iteration. 

To quantify the penalty error of Problem \ref{pro:penalty} with a fixed  parameter $\rho \ge 0$, we  introduce the following sequence of auxiliary problems:
find  $u^{\rho,0}\in \R^{N\t d}$ satisfying
$F_i(u^{0})=0$, $i\in \cI$, and for each $n\ge 1$, given $u^{\rho,n-1}\in \R^{N\t d}$, find $u^{\rho, n}\in \R^{N\t d}$ such that $u^{\rho,n}=Q^\rho u^{\rho,n-1}$, where 
 for any given $u\in \R^{N\t d}$, we define $Q^\rho u\coloneqq ((Q^\rho u)^1,\ldots, (Q^\rho u)^d)\in \R^{N\t d}$ to be the quantity which satisfies the following penalized equation:
\bb\l{eq:penalty_itr_op}
F_i(Q^\rho u)-\rho\sum_{j\not =i}\pi( u^j-c-(Q^\rho u)^i )=0,\q i\in \cI.
\ee


Note that the above auxiliary problem has the same initialization as the iterated optimal stopping approximation to Problem \ref{pro:switching}, i.e., $u^{\rho,0}=u^0$. 
Moreover, for any fixed $w\in  \R^{N\t d}$, we can consider the system $G_w=(G_{w,i})_{i\in\cI}$ such that  $G_{w,i}(u)\coloneqq F_i(u)-\rho\sum_{j\not =i}\pi( w^j-c-u^i )$, $i\in \cI$, which satisfies the monotone condition \eqref{eq:mono} with the constant $\gamma$ due to the facts that $F$ is monotone and $\pi$ is non-decreasing. Consequently, we have the following comparison principle for 
\eqref{eq:penalty_itr_op}: for any fixed $w\in  \R^{N\t d}$, if $u\in  \R^{N\t d}$ satisfies $G_{w,i}(u)\le 0$, $i\in \cI$, and 
$v\in  \R^{N\t d}$ satisfies $G_{w,i}(v)\ge 0$, $i\in \cI$, then $u\le v$. Therefore, we can easily establish  the well-posedness of \eqref{eq:penalty_itr_op} by adapting the proof of Theorem \ref{thm:wp_p}. 

The following result summarizes the essential properties of the operator $Q^\rho$ and the iterates $(u^{\rho,n})_{n\in\N}$.

%
\begin{Proposition}\l{lem:Q^rho}
The operator $Q^\rho$ is monotone, satisfies the a priori estimate:
$$
\|Q^\rho u\|\le \max(\|F(0)\|/\gamma,\, \|u\|) \q  \fa u\in \R^{N\t d},
$$
and is Lipschitz continuous with constant 1, i.e., $\|Q^\rho u-Q^\rho v\|\le \|u-v\|$ for all $u,v\in \R^{N\t d}$.
Consequently, for any given penalty parameter $\rho \ge 0$ and switching cost $c\ge 0$, the iterates $(u^{\rho,n})_{n\in \N}$ 
converge monotonically from below to the solution $u^\rho$ of Problem \ref{pro:penalty} as $n\to \infty$. 
\end{Proposition}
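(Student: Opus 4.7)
The plan is to treat the four assertions in sequence, leveraging the comparison principle for the auxiliary system $G_w$ already noted in the paragraph preceding the statement. The first two properties are essentially transcriptions of the arguments in Proposition \ref{prop:comparison_p} and Lemma \ref{lem:bound_p}; the Lipschitz bound is the only step requiring a genuinely new idea; and the monotone convergence of the iterates follows once the preceding three properties are in hand.

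For monotonicity, suppose $u \ge v$. Because $\pi$ is non-decreasing, $\pi(u^j - c - x) \ge \pi(v^j - c - x)$ pointwise, so plugging $Q^\rho u$ into the system $G_v$ gives $G_{v,i}(Q^\rho u) \ge G_{u,i}(Q^\rho u) = 0 = G_{v,i}(Q^\rho v)$, and the comparison principle for $G_v$ yields $Q^\rho u \ge Q^\rho v$. For the a priori bound, I would mimic the case distinction of Lemma \ref{lem:bound_p} at an index $(i,l)$ realising $\|Q^\rho u\|$: if the extremal value is positive and strictly exceeds $\|u\|$, then every argument $u^j_l - c - (Q^\rho u)^i_l$ is non-positive, every penalty term vanishes, and \eqref{eq:mono} compared against $0$ gives $\gamma(Q^\rho u)^i_l \le -F_i(0)_l \le \|F(0)\|$; otherwise the value is already dominated by $\|u\|$. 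The case where the extremal value is negative is handled exactly as in Lemma \ref{lem:bound_p}, with the non-negativity of $\pi$ absorbed on the favourable side, yielding $|(Q^\rho u)^i_l| \le \|F(0)\|/\gamma$.

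The main obstacle is the Lipschitz estimate with constant $1$. My approach is to show, for $e \coloneqq \|u-v\|$, that the constant-shifted vector $Q^\rho v + e\mathbf{1}$ is a supersolution of the system $G_u = 0$. Two inputs combine. First, a uniform vertical shift $x \mapsto x + e\mathbf{1}$ always attains its own maximum discrepancy, so \eqref{eq:mono} applied to the pair $(Q^\rho v + e\mathbf{1}, Q^\rho v)$ gives $F_i(Q^\rho v + e\mathbf{1}) \ge F_i(Q^\rho v) + \gamma e$. Second, $u^j \le v^j + e$ together with the monotonicity of $\pi$ gives $\pi(u^j - c - (Q^\rho v)^i - e) \le \pi(v^j - c - (Q^\rho v)^i)$. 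Subtracting the defining equation of $Q^\rho v$ then yields $G_{u,i}(Q^\rho v + e\mathbf{1}) \ge \gamma e \ge 0$, and comparison for $G_u$ delivers $Q^\rho u \le Q^\rho v + e\mathbf{1}$. The symmetric inequality closes the proof.

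For the monotone convergence, I would first verify $u^{\rho,1} \ge u^{\rho,0}$: since $F(u^{\rho,0}) = 0$ while $F_i(u^{\rho,1}) = \rho \sum_{j \ne i}\pi(u^{\rho,0,j}-c-u^{\rho,1,i}) \ge 0$, the monotonicity condition \eqref{eq:mono} on $F$ applied at the extremal index forces the comparison. The just-proved monotonicity of $Q^\rho$ then propagates this inequality along the iteration, so $(u^{\rho,n})$ is monotone increasing, and the a priori bound yields $\|u^{\rho,n}\| \le \|F(0)\|/\gamma$ by induction. Hence the iterates converge to some $u^\infty$, and passing to the limit in $u^{\rho,n+1} = Q^\rho u^{\rho,n}$ via the Lipschitz continuity just established gives $u^\infty = Q^\rho u^\infty$, i.e., $G^\rho(u^\infty) = 0$. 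Uniqueness from Proposition \ref{prop:comparison_p} identifies $u^\infty$ with $u^\rho$, completing the proof.
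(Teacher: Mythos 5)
Your proof is correct and follows essentially the same route as the paper: the key Lipschitz step (showing the shifted vector $Q^\rho v + e\mathbf{1}$ is a supersolution of the system with input $u$, via \eqref{eq:translation} and the monotonicity of $\pi$, then invoking the comparison principle for \eqref{eq:penalty_itr_op}) is exactly the paper's argument. The only cosmetic difference is that the paper obtains monotonicity and Lipschitz continuity simultaneously from the single bound $Q^\rho u - Q^\rho v \le \|(u-v)^+\|$, whereas you prove the two properties separately.
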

\begin{proof}
The a priori bound can be obtain exactly as Lemma \ref{lem:Q}.
For the monotonicity and Lipschitz continuity of $Q^\rho$, it suffices to show  for any given $u,v\in \R^{N\t d}$, we have
$Q^\rho u-Q^\rho v\le \| (u-v)^+\|$.

For any given $u,v\in \R^{N\t d}$, we introduce the quantity $\hat{u}\coloneqq Q^\rho v+ \| (u-v)^+\|$.
It is important to observe that for any given  $L\ge 0$ and $u\in \R^{N\t d}$, the monotonicity \eqref{eq:mono} of $F$ implies 
\bb\l{eq:translation}
F_i(u+L)-F_i(u)\ge \gamma L \q \fa i\in \cI,
\ee
which along with the fact that
$$
\pi( u^{j}-c-\hat{u}^{i})=\pi( u^{j}- \| (u-v)^+\|-c-(Q^\rho v)^i)\le \pi( v^{j}-c-(Q^\rho v)^i), \q j\not =i,
$$
enables us to conclude the desired result through the following estimate: for any $i\in \cI$,
\begin{align*}
F_i(\hat{u})-\rho\sum_{j\not =i}\pi( u^j-c-\hat{u}^i)\ge F_i(Q^\rho v)+\gamma \| (u-v)^+\|-\rho\sum_{j\not =i}\pi( v^{j}-c-(Q^\rho v)^i)\ge 0.
\end{align*}
The above implies that $\hat{u}$ is a supersolution to \eqref{eq:penalty_itr_op} with the input $u$. By the comparison principle for \eqref{eq:penalty_itr_op}, $Q^\rho u\le \hat{u}$ and hence $Q^\rho u-Q^\rho v\le \| (u-v)^+\|$ as desired.
Then the monotone convergence of $(u^{\rho,n})_{n\in \N}$ follows from similar arguments as those in Proposition \ref{prop:iterated_conv}.
\end{proof}

The next result provides an upper bound of the term $u^n-u^{\rho,n}$, where $u^n$ and $u^{\rho,n}$ solve  the equations \eqref{eq:switching_itr} and \eqref{eq:penalty_itr_op}, respectively.
\begin{Proposition}\l{prop:err_n}
For any given penalty parameter $\rho\ge 0$ and switching cost $c> 0$, let 
$(u^n)_{n\in\N}$ and $(u^{\rho,n})_{n\in\N}$ be recursively defined by  the equations \eqref{eq:switching_itr} and \eqref{eq:penalty_itr_op}, respectively. 
Suppose that there exist positive constants $\tau$ and $\sigma$ such that $\pi(y)\ge \tau y^{1/\sigma}$ for all $0\le y\le 2\|F(0)\|/\gamma$. Then we have 
$$
\|u^n-u^{\rho,n}\|\le \left(\f{C}{\tau\rho}\right)^\sigma n, \q n\ge 0,
$$
where $C=\sup_{\|u\|\le  \|F(0)\|/\gamma} \|F(u)\|$.
\end{Proposition}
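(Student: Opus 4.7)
The plan is to prove the estimate by induction on $n$, exploiting the Lipschitz property $\|Q^\rho u - Q^\rho v\|\le \|u-v\|$ from Proposition \ref{lem:Q^rho}. The triangle inequality gives
\begin{align*}
\|u^n - u^{\rho,n}\|
&= \|Q u^{n-1} - Q^\rho u^{\rho,n-1}\| \\
&\le \|Q u^{n-1} - Q^\rho u^{n-1}\| + \|Q^\rho u^{n-1} - Q^\rho u^{\rho,n-1}\| \\
&\le \|Q u^{n-1} - Q^\rho u^{n-1}\| + \|u^{n-1}-u^{\rho,n-1}\|,
\end{align*}
so, since $u^0 = u^{\rho,0}$, everything reduces to the single-step penalty bound
$$
\|Qu - Q^\rho u\| \le \left(\f{C}{\tau\rho}\right)^\sigma
$$
for every $u\in\R^{N\t d}$ with $\|u\|\le \|F(0)\|/\gamma$. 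The a priori estimates in Lemma \ref{lem:Q} and Proposition \ref{lem:Q^rho} keep both iterate sequences inside this ball, so the single-step bound is applicable at every step.

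For the single-step estimate I would first establish $0\le Qu-Q^\rho u$. Setting $v=Qu$, the obstacle inequality $v^i\ge \cM_i u$ gives $u^j-c-v^i\le 0$ componentwise for each $j\not=i$, so $\pi(u^j-c-v^i)=0$; hence $F_i(v)-\rho\sum_{j\not=i}\pi(u^j-c-v^i)=F_i(v)\ge 0$ for every $i\in\cI$, and the comparison principle for \eqref{eq:penalty_itr_op} yields $Q^\rho u\le v=Qu$.

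For the reverse inequality $Qu\le Q^\rho u+\delta$ with $\delta\coloneqq (C/(\tau\rho))^\sigma$, I would construct $\tilde v\coloneqq Q^\rho u+\delta$ and verify that it is a supersolution of the obstacle problem \eqref{eq:switching_itr}, i.e., $\min(F_i(\tilde v),\,\tilde v^i-\cM_i u)\ge 0$ for every $i\in\cI$. The translation property \eqref{eq:translation} delivers $F_i(\tilde v)\ge F_i(Q^\rho u)+\gamma\delta\ge 0$, since $F_i(Q^\rho u)\ge 0$ by the penalized equation itself. The obstacle component $\tilde v^i-\cM_i u\ge 0$ amounts to proving $u^j_l-c-(Q^\rho u)^i_l\le \delta$ for every $j\not=i$ and $l\in\cN$, and this is the heart of the argument: from \eqref{eq:penalty_itr_op} each summand satisfies $\rho\,\pi(u^j_l-c-(Q^\rho u)^i_l)\le F_i(Q^\rho u)_l\le C$, using the a priori bound $\|Q^\rho u\|\le \|F(0)\|/\gamma$ and the definition of $C$. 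Since $u^j_l-c-(Q^\rho u)^i_l\le \|u\|+\|Q^\rho u\|\le 2\|F(0)\|/\gamma$, inverting the growth condition $\pi(y)\ge \tau y^{1/\sigma}$ on its assumed range yields $u^j_l-c-(Q^\rho u)^i_l\le (C/(\tau\rho))^\sigma=\delta$ (the inequality being trivial whenever the left-hand side is nonpositive). The comparison principle for \eqref{eq:switching_itr} then closes the single-step bound.

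Combining this bound with the Lipschitz recursion above and the initialization $u^0=u^{\rho,0}$ gives, by induction, $\|u^n-u^{\rho,n}\|\le (C/(\tau\rho))^\sigma n$. The main obstacle is the reverse inequality: one must invert the penalty growth $\pi(y)\ge\tau y^{1/\sigma}$ to convert the defect in the penalized equation into a pointwise violation bound for the interconnected obstacle, while carefully tracking the a priori norms of $u$ and $Q^\rho u$ so that $y$ always lies in the range $[0,\,2\|F(0)\|/\gamma]$ where the growth hypothesis has been imposed.
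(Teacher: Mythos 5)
Your proposal is correct and follows essentially the same route as the paper: the Lipschitz recursion for $Q^\rho$ reduces everything to the single-step bound, and the bound $Qu\le Q^\rho u+(C/(\tau\rho))^\sigma$ is obtained exactly as in the paper by showing $\rho\pi(u^j_l-c-(Q^\rho u)^i_l)\le C$, inverting the growth condition $\pi(y)\ge\tau y^{1/\sigma}$ on $[0,2\|F(0)\|/\gamma]$, and invoking the translation property \eqref{eq:translation} together with the comparison principle for \eqref{eq:switching_itr}. The only (cosmetic) difference is that you justify $Q^\rho u\le Qu$ directly by checking that $Qu$ is a supersolution of \eqref{eq:penalty_itr_op}, whereas the paper gets this one-sidedness from the monotone convergence of $Q^\rho u^{n-1}$ to $u^n$; both are valid.
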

\begin{proof}
The Lipschitz continuity of $Q^\rho$ implies that for any $n\in \N$,
\bb\l{eq:induction}
\|u^n-u^{\rho,n}\|=\|u^{n}-Q^\rho u^{n-1}\|+\|Q^\rho u^{n-1}-Q^\rho u^{\rho, n-1}\|\le \|u^{n}-Q^\rho u^{n-1}\|+\| u^{n-1}- u^{\rho, n-1}\|.
\ee

Now we bound $u^{n}-Q^\rho u^{n-1}$ for any given $n\in \N$. 
From the  a priori bounds of $u^n$ (Proposition \ref{prop:iterated_conv}) and $Q^\rho$ (Proposition \ref{lem:Q^rho}), we know   $\|Q^\rho u^{n-1}\|\le \|F(0)\|/\gamma$ for all $\rho\ge 0$ and  $n\in \N$.
Moreover, by using the comparison principle for \eqref{eq:penalty_itr_op} and a modification of the arguments in Theorem \ref{thm:mono_conv}, we can deduce for any given $n\in \N$ that  $(Q^\rho u^{n-1})_{\rho>0}$ converges monotonically from below to $u^n$ as  $\rho\to \infty$. 
This implies that $\|\rho\pi( u^{n-1,j}-c-(Q^\rho u^{n-1})^i)\|\le C$ for all $j\not =i$, $i\in \cI$, where $C$ is defined as in \eqref{eq:C}. Therefore, we have 
\begin{align*}
(Q^\rho u^{n-1})^i+\bigg(\f{C}{\tau\rho}\bigg)^\sigma&-\cM_i u^{n-1}=\min_{j\not =i}\f{1}{\rho^\sigma}\left(\bigg(\f{C}{\tau}\bigg)^\sigma-\rho^\sigma(u^{n-1,j}-c-(Q^\rho u^{n-1})^i) \right)\\
&\q \ge\f{1}{(\rho\tau)^\sigma} \min_{j\not =i}\left(C^\sigma-\|\rho\pi(u^{n-1,j}-c-(Q^\rho u^{n-1})^i)\|^\sigma\right)\ge 0.
\end{align*}
Moreover, by applying \eqref{eq:translation} with $u=Q^\rho u^{n-1}$ and $L=({C}/{(\tau\rho)})^\sigma$, we can obtain that 
$$
F_i\bigg(Q^\rho u^{n-1}+\bigg(\f{C}{\tau\rho}\bigg)^\sigma\bigg)\ge F_i(Q^\rho u^{n-1})+ \gamma \bigg(\f{C}{\tau\rho}\bigg)^\sigma\ge 0, \q \fa i\in \cI,
$$
which implies $Q^\rho u^{n-1}+\big({C}/{(\tau\rho)}\big)^\sigma$ is a supersolution to \eqref{eq:switching_itr}. Consequently, we obtain  
$0\le u^{n}-Q^\rho u^{n-1}\le ({C}/{(\tau\rho)})^\sigma$ for all $n\in \N$, and conclude the desired result from $u^{\rho,0}=u^0$ and \eqref{eq:induction}.
\end{proof}

\begin{Remark}\l{rmk:penalty}
This proposition  greatly extends the results in \cite{witte2012} (even for the case with $\sigma=1$) by removing the continuous differentiability assumption of the penalty function $\pi$.  In practice, one can choose $\pi(y)= (y^+)^{1/\sigma}$ as the penalty function. Since $\pi$ is  semismooth  if $\sigma=1$,  
a direct application of semismooth Newton methods allows us to solve Problem \ref{pro:penalty} efficiently (see \cite{witte2012, reisinger2018}).  A penalty term with $\sigma>1$ needs an additional smoothing for the application of Newton methods and usually requires a larger number of Newton iterations to solve the penalized equation \cite{huang2010}.
Though the higher  convergence rate allows us to use a relatively small value of $\rho$ to achieve the desired accuracy, which could avoid the numerical instability  caused by the usage of a large penalty parameter according to \cite{huang2010}, we do not discover any problem using the penalty function $\pi(y)= y^+$ in our numerical experiments.
\end{Remark}

Now we are ready to conclude the penalty error of Problem \ref{pro:penalty} to Problem \ref{pro:switching}. The following result has been proved in \cite[Lemma 6.1]{bonnans2007}, and will be used in our error estimates.
\begin{Lemma}\l{lemma:phi}
 Let $\phi:\R\to \R$, $\phi(x)=\nu a^x+bx$, where $0<a<1$, $0<b<\infty$ and $\nu > 0$. Let $m\coloneqq \min_{n\in \N}\phi(n)$. Then we have
 $$
m\le \begin{cases} \nu, & -b/(\nu \ln a)\ge 1,\\
-ab/(\ln a)+b[\log_a (b/(\nu \ln a))+1], &\textnormal{otherwise}.
\end{cases}
$$
\end{Lemma}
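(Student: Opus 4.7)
The plan is to exploit the strict convexity of $\phi$ together with a simple rounding argument. First I would compute $\phi'(x) = \nu a^x \ln a + b$ and $\phi''(x) = \nu a^x (\ln a)^2 > 0$, so $\phi$ is strictly convex on $\R$ with a unique unconstrained minimizer $x^* \in \R$ determined by $\phi'(x^*) = 0$. This gives $a^{x^*} = -b/(\nu \ln a)$, which is positive since $\ln a < 0$; equivalently, $x^* = \log_a(-b/(\nu \ln a))$. The two cases in the lemma then correspond exactly to whether $x^* \le 0$ or $x^* > 0$, since $0 < a < 1$ means $a^{x^*} \ge 1$ iff $x^* \le 0$.

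In the first case, $-b/(\nu\ln a) \ge 1$ forces $x^* \le 0$, so $\phi$ is non-decreasing on $[0, \infty)$; hence $m = \min_{n \in \N}\phi(n) \le \phi(0) = \nu$, which is the first bound.

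In the second case, $x^* > 0$ and I would set $n^* \coloneqq \lceil x^* \rceil \in \N$, so that $n^* \in [x^*, x^* + 1]$. By the convexity of $\phi$ combined with the fact that $\phi$ attains its minimum on $\R$ at $x^*$, every point of this interval has $\phi$-value bounded above by $\max(\phi(x^*), \phi(x^* + 1)) = \phi(x^* + 1)$, whence $m \le \phi(n^*) \le \phi(x^* + 1)$. A direct substitution using $\nu a^{x^*} = -b/\ln a$ then gives
\[
\phi(x^* + 1) = \nu a \cdot a^{x^*} + b(x^* + 1) = -\frac{ab}{\ln a} + b\bigl[\log_a(-b/(\nu \ln a)) + 1\bigr],
\]
which matches the bound in the lemma (here $\log_a$ is applied to a positive argument, so the expression $b/(\nu\ln a)$ appearing in the statement should be read with the appropriate sign convention coming from $\ln a<0$).

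The entire argument is one-variable calculus plus a one-line convexity interpolation; I do not anticipate any significant obstacle beyond keeping careful track of the signs forced by $\ln a < 0$.
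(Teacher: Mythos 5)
Your argument is correct, and it is worth noting that the paper itself does not prove this lemma at all --- it only cites \cite[Lemma 6.1]{bonnans2007} --- so there is no in-paper proof to compare against; what you give is the standard (and surely intended) one-variable convexity argument. Two small points are worth recording. First, your observation about the sign is right and important: since $\ln a<0$, the quantity $b/(\nu\ln a)$ appearing inside $\log_a$ in the statement is negative, and the correct argument is $-b/(\nu\ln a)=a^{x^*}>0$; this typo is inherited from the cited reference and propagates into the paper's proof of Theorem \ref{thm:panalty_rate}, where $\ln\big(\tfrac{\mu}{L_\kappa\ln(1-\mu)}(\cdot)\big)$ is likewise written with a negative argument. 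Second, your first case uses $\phi(0)=\nu$, which requires $0\in\N$; this is consistent with how the lemma is invoked in \eqref{eq:u-u^rho} (where $n\in\N\cup\{0\}$), but if $\N$ excluded $0$ the bound $m\le\nu$ would fail in general (one would only get $\phi(1)=\nu a+b$, which need not be $\le\nu$ under the hypothesis $b\ge\nu|\ln a|$). With those conventions made explicit, the proof is complete: strict convexity gives the unique minimizer $x^*$ with $a^{x^*}=-b/(\nu\ln a)$, the case split is exactly $x^*\le 0$ versus $x^*>0$, and in the latter case $m\le\phi(\lceil x^*\rceil)\le\phi(x^*+1)$, which evaluates to the stated expression.
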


\begin{Theorem}\l{thm:panalty_rate}
For any given switching cost $c> 0$, let $u$ and $u^{\rho}$ solve  Problem \ref{pro:switching} and \ref{pro:penalty}, respectively. 
Suppose that Assumption \ref{assum:concave} and the assumptions in  Proposition \ref{prop:err_n}  hold. Then if $c>2\|F(0)\|/\gamma$, we have $u^\rho=u$ for all $\rho\ge 0$, and if $c\le 2\|F(0)\|/\gamma$, we have for any $\kappa\in (0,c)$, 
\begin{equation*}
\|u-u^{\rho}\|\le f(\rho), \q \textnormal{where}\q f\sim -\f{\sigma (C/\tau)^\sigma}{\ln (1-\kappa/L_\kappa)}\rho^{-\sigma }\ln \rho,\q \textnormal{as $\rho\to \infty$,\footnotemark}
\end{equation*}
with the constants $C$  and $L_\kappa$  defined as in  \eqref{eq:C} and \eqref{eq:L} respectively.
\footnotetext{Recall that $f\sim g$ as $\rho\to \infty$ if $\lim_{\rho\to \infty} f(\rho)/g(\rho)=1$.}
\end{Theorem}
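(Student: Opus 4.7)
My strategy is to reduce the penalty error on Problem \ref{pro:penalty} to the sum of the regularization error on Problem \ref{pro:switching} (Theorem \ref{thm:iterated_rate}) and the penalty error on the iterated optimal stopping scheme (Proposition \ref{prop:err_n}), and then to optimize the resulting bound over the iteration index $n$ via Lemma \ref{lemma:phi}.

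For the threshold case $c > 2\|F(0)\|/\gamma$, I would argue directly that $u$ itself solves the penalized equation. Combining the a priori bound $\|u\| \le \|F(0)\|/\gamma$ from Remark \ref{rmk:wp} with the switching cost assumption yields $u^j - c - u^i \le 2\|F(0)\|/\gamma - c < 0$ for every pair $i \ne j$. This makes every obstacle in the QVI strict, forcing $F_i(u) = 0$, and at the same time makes every penalty term $\pi(u^j - c - u^i)$ vanish, so $G^\rho_i(u) = F_i(u) = 0$ for every $i\in\cI$; the uniqueness part of Theorem \ref{thm:wp_p} then gives $u^\rho = u$.

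For the complementary case $c \le 2\|F(0)\|/\gamma$, the monotone convergences $u^\rho \le u$ (Theorem \ref{thm:mono_conv}) and $u^{\rho,n} \le u^\rho$ (Proposition \ref{lem:Q^rho}) yield componentwise
$$0 \le u - u^\rho \le (u - u^n) + (u^n - u^{\rho,n}),$$
so Theorem \ref{thm:iterated_rate} and Proposition \ref{prop:err_n} combine to give
$$\|u - u^\rho\| \le \frac{L_\kappa}{\mu}(1-\mu)^n + \Bigl(\frac{C}{\tau\rho}\Bigr)^\sigma n \quad \text{for every } n \ge 0.$$
Moreover, the assumption $\kappa < c \le 2\|F(0)\|/\gamma$ forces $\gamma\kappa < 2\|F(0)\| < 2\|F(0)\| + \kappa$, hence $\mu = \kappa/L_\kappa < 1$ in this regime.

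I then define $f(\rho)$ to be the minimum of the right-hand side over $n \in \N$ and apply Lemma \ref{lemma:phi} with $\nu = L_\kappa/\mu$, $a = 1-\mu$, $b = (C/(\tau\rho))^\sigma$. For large $\rho$, the quantity $-b/(\nu \ln a)$ becomes smaller than $1$, placing us in the second branch of the lemma. The dominant contribution comes from the term $b \log_a(b/(\nu\ln a))$, and expanding $\ln b = -\sigma\ln\rho + O(1)$ isolates the leading asymptotic $-\sigma(C/\tau)^\sigma \rho^{-\sigma}\ln\rho / \ln(1-\kappa/L_\kappa)$, while the remaining terms $-ab/\ln a + b$ plus the $O(1)$ piece inside the logarithm contribute only at order $\rho^{-\sigma}$ and are therefore absorbed by the $\ln\rho$ factor. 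The main bookkeeping challenge is tracking these constants carefully enough to confirm the asymptotic equivalence stated in the theorem rather than merely the correct rate; the fact that the optimal $n$ grows like $-\sigma \ln\rho / \ln(1-\mu)$ and lies in $\N$ makes this a discrete rather than continuous minimization, which is exactly what Lemma \ref{lemma:phi} is designed to handle.
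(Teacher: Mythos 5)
Your proposal is correct and follows essentially the same route as the paper: the same treatment of the large-cost case via the a priori bounds, the same three-term decomposition $u-u^\rho\le (u-u^n)+(u^n-u^{\rho,n})+(u^{\rho,n}-u^\rho)$ with the last term nonpositive, and the same optimization over $n$ via Lemma \ref{lemma:phi} with $\nu=L_\kappa/\mu$, $a=1-\mu$, $b=(C/(\tau\rho))^\sigma$. The only cosmetic difference is that you close the large-cost case by the uniqueness of solutions to the penalized equation rather than the comparison principle for $F$; both are valid.
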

\begin{proof}
Suppose that $c>2\|F(0)\|/\gamma$. Theorem  \ref{thm:wp_p} shows that $\|u^\rho\|\le \|F(0)\|/\gamma$ for all $\rho\ge 0$, which  implies that
$$
u^{\rho,j}_l-c-u^{\rho,i}_l\le 2\|F(0)\|/\gamma-c< 0, \q  \fa i\in \cI, \, j\not =i,\, l\in\cN.
$$
Thus we have $G^\rho_i(u^\rho)= F_i(u^{\rho})=0$ for all $i\in \cI$ and $\rho\ge 0$. Similarly, we can obtain by using $\|u\|\le \|F(0)\|/\gamma$ (see Remark \ref{rmk:wp}) that $u^i-\cM_i u>0$, which implies that $F_i(u)=0$ for all $i\in\cI$. The comparison principle of $F$ gives us that $u=u^\rho$ for all $\rho\ge 0$. 

Now we assume that $c\le 2\|F(0)\|/\gamma$.
Since $u^\rho\le u$ for all $\rho\ge 0$, it remains to derive an upper bound of $u-u^\rho$. Note that 
$$
u-u^\rho\le u-u^n+u^n-u^{\rho,n}+u^{\rho,n}-u^\rho, 
$$
where  $u^n$ and $u^{\rho,n}$ are recursively defined by the equations \eqref{eq:switching_itr} and \eqref{eq:penalty_itr_op}, respectively. 
Proposition  \ref{lem:Q^rho} implies that $u^{\rho,n}\le u^\rho$ for all $\rho\ge 0$ and $n\in \N\cup\{0\}$.
Hence we can deduce from Theorem \ref{thm:iterated_rate} and Proposition \ref{prop:err_n}  that for any $\kappa\in (0,c)$,
\bb\l{eq:u-u^rho}
\|u-u^\rho\|\le L_\kappa\f{(1-\mu)^n}{\mu}+\left(\f{C}{\tau\rho}\right)^\sigma n  \q  \fa n\in\N\cup\{ 0\},
\ee
%
where $C$  and $L_\kappa$ are defined as in  \eqref{eq:C} and \eqref{eq:L} respectively, and $\mu=\min(1,\kappa/L_\kappa)<1$ due to the assumption that $c\le 2\|F(0)\|/\gamma$. 
Now we minimize the right-hand side of \eqref{eq:u-u^rho} over $n$ by applying Lemma \ref{lemma:phi} with $\nu=L_\kappa/\mu$, $a=1-\mu$ and $b=\big({C}/{(\tau\rho)}\big)^\sigma$.
If $\rho$ is sufficiently large, then $-b/(\nu \ln a)<1$, which implies that
$$
\|u-u^\rho\|\le 
-\f{1}{\ln (1-\mu)}\bigg(\f{C}{\tau\rho}\bigg)^\sigma\bigg[1-\mu- \ln(1-\mu)-\ln\bigg(\f{\mu}{L_\kappa \ln(1-\mu)}\bigg(\f{C}{\tau\rho}\bigg)^\sigma\bigg)\bigg].
$$
Thus by using the following identity:
$$
-\ln\bigg(\f{\mu}{L_\kappa \ln(1-\mu)}\bigg(\f{C}{\tau\rho}\bigg)^\sigma\bigg)=-\ln\bigg(\f{\mu}{L_\kappa \ln(1-\mu)}\bigg(\f{C}{\tau}\bigg)^\sigma\bigg)+\sigma \ln \rho,
$$
we deduce that $\|u-u^\rho\|\le f(\rho)$, where $f$ satisfies that 
$f\sim -\f{\sigma (C/\tau)^\sigma}{\ln (1-\mu)}\rho^{-\sigma }\ln \rho$, as $\rho\to \infty$.
\end{proof}

\begin{Remark}\l{rmk:ln}
Recall that $\mu=\cO(c)$ as $c\to 0$, hence the upper bound behaves as $\rho^{-\sigma}\ln \rho/c$ for small switching cost $c$. Unfortunately, we are not sure whether this  dependence on $c$ is optimal since the possible blow-up of the penalization error for small enough $c$ could be due to the fact that the iterated optimal stopping approximation does not provide an accurate approximation to Problem \ref{pro:switching} with small switching cost. Our numerical experiments  show that as the switching cost tends to zero,  the penalization error with a fixed penalty parameter indeed grows at a rate $\cO(c^{-1/2})$ for certain ranges of switching costs, but then stablizes to a limiting value 
(see Section \ref{sec:numerical}). As we shall see in Section \ref{sec:error_zero},  Problem \ref{pro:switching} degenerates into an HJB equation as the switching cost $c\to 0$, and  Problem \ref{pro:penalty} with $c=0$ provides a penalty approximation to such equation, with an asymptotical error $\cO(1/\rho^\sigma)$ as the penalty parameter $\rho\to \infty$.
Thus the penalization error with sufficiently small positive switching cost is dominated by this limiting error (see \eqref{eq:err_min}).
\end{Remark}

We proceed to  outline the key results of the convergence analysis by using the  time-marching iteration, and demonstrate that even if the time-marching iteration could improve the regularization error for small switching cost by adjusting the pseudo-time parameter $\eps$, it reduces the accuracy of penalty approximations at each iterate. Hence it leads to the same error estimate of Problem \ref{pro:penalty} as the iterated optimal stoping approximation for small switching cost.

For any given   parameters $\rho \ge 0$ and $\eps\ge 0$, we   introduce the following sequence of auxiliary problems:
find  $u^{\rho,0}\in \R^{N\t d}$ satisfying
$F_i(u^{0})=0$, $i\in \cI$, and for each $n\ge 1$, given $u^{\rho,n-1}\in \R^{N\t d}$, find $u^{\rho, n}\in \R^{N\t d}$ such that $u^{\rho,n}=T^\rho u^{\rho,n-1}$, where 
 for any given $u\in \R^{N\t d}$, we define $T^\rho u\coloneqq ((T^\rho u)^1,\ldots, (T^\rho u)^d)\in \R^{N\t d}$ to be the quantity which satisfies the following penalized equation:
$$
F_i(T^{\rho}u)-\rho\sum_{j\not =i}\pi\big( (T^{\rho}u)^{j}-c-(T^{\rho}u)^{i}-\eps((T^{\rho}u)^{i}-u^{i})\big)=0,\q i\in \cI.
$$
%
One can establish analogue results of Proposition \ref{lem:Q^rho} for the operator $T^\rho$, and demonstrate that 
$T^\rho u^{n-1}+(C_1/\rho)^\sigma/\eps$ is a supersolution to \eqref{eq:pseudo_itr} under the same assumptions of Proposition \ref{prop:err_n}. Therefore, following the proof of Theorem \ref{thm:panalty_rate}, we deduce from Theorem \ref{thm:pseudo_rate} that for any $\kappa\in (0,c)$, 
$$
\|u-u^\rho\|\le  L_\kappa\f{(1-\mu)^n}{\mu}+\f{1}{\eps}\left(\f{C}{\tau\rho}\right)^\sigma n  \q \fa n\in \N, \, \eps>0,
$$
where $C$  and $L_\kappa$ are defined as in  \eqref{eq:C} and \eqref{eq:L} respectively, and $\mu=\kappa/(\kappa+\eps L)$.
Minimizing over $n$, we obtain $\|u-u^\rho\|=\cO(-\f{\rho^{-\sigma}\ln\rho}{\eps\ln(1-\f{c}{c+\eps L_\kappa})})$ for all $\eps>0$ and large enough $\rho$. Therefore, by  discussing the cases $\eps=\cO(c)$ and $\eps/c\to \infty$  separately, we arrive at the same error estimate as that in Theorem \ref{thm:panalty_rate}.

Finally we  end this section with an exact construction of the optimal switching regions
\bb\l{eq:Gamma}
\Gamma_i\coloneqq \{l\in \cN \mid u^i_l=(\cM_iu)_l\}, \q i\in \cI,
\ee
 of Problem \ref{pro:switching} with  a  given switching cost $c>0$ using the solution of Problem \ref{pro:penalty}.
Suppose the estimate $0\le u-u^{\rho}\le C_0\rho^{-\sigma}\ln\rho$ holds for some constants $C_0,\sigma>0$, where $\sigma$ is the degree of the penalty function $\pi$ and $C_0$ in practice can be estimated using numerical results. Then  we  shall   define   the sets
\bb\l{eq:Gamma_rho}
\Gamma_{\rho,i}\coloneqq \{l\in \cN \mid |u^{\rho,i}_l-(\cM_iu^\rho)_l|\le C_0\rho^{-\sigma}
\ln \rho\}, \q  i\in \cI.
\ee
The next result demonstrates that $\Gamma_{\rho,i}$ in fact coincides with $\Gamma_{i}$ for large enough $\rho$.

\begin{Theorem}\l{thm:free-bdy}
Suppose that  there exist positive constants $C_0$ and $\sigma$ such that 
the estimate $0\le u-u^{\rho}\le C_0\rho^{-\sigma}\ln\rho$ holds for all $\rho>0$. For each $i\in \cI$, let $\Gamma_i$ and $\Gamma_{\rho,i}$ be the sets defined as in \eqref{eq:Gamma} and 
\eqref{eq:Gamma_rho}, respectively.
Then for a given switching cost $c> 0$, there exists $\rho_0>0$ such that $\Gamma_i=\Gamma_{\rho,i}$ for all $\rho\ge \rho_0$  and $i\in \cI$.
\end{Theorem}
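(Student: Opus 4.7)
The plan is to establish both inclusions $\Gamma_i \subseteq \Gamma_{\rho,i}$ and $\Gamma_{\rho,i} \subseteq \Gamma_i$ by leveraging the Lipschitz continuity of the interconnected obstacle operator $\cM_i$ together with the assumed penalization bound $0\le u-u^{\rho}\le C_0\rho^{-\sigma}\ln\rho$. Write $\eta_\rho\coloneqq C_0\rho^{-\sigma}\ln\rho$. First I would show the elementary estimate
\begin{equation*}
\bigl|(u^{\rho,i}_l-(\cM_i u^\rho)_l) - (u^i_l-(\cM_i u)_l)\bigr|\le \eta_\rho, \qquad i\in\cI,\; l\in\cN,
\end{equation*}
which follows directly from $u-\eta_\rho\le u^\rho\le u$ (componentwise) and the obvious $1$-Lipschitz property of the maximum, namely $|\cM_i u-\cM_i u^\rho|\le \|u-u^\rho\|\le \eta_\rho$.

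With this estimate in hand, the inclusion $\Gamma_i\subseteq\Gamma_{\rho,i}$ is immediate for every $\rho>0$: if $l\in\Gamma_i$, then $u^i_l-(\cM_i u)_l=0$, and the displayed bound gives $|u^{\rho,i}_l-(\cM_i u^\rho)_l|\le \eta_\rho$, i.e.\ $l\in \Gamma_{\rho,i}$.

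For the reverse inclusion I would use the finiteness of $\cI\t\cN$ to produce a uniform positive gap on the complement of the switching region. Specifically, define
\begin{equation*}
\delta\coloneqq \min\bigl\{\,u^i_l-(\cM_i u)_l \;:\; i\in\cI,\; l\in\cN\setminus\Gamma_i\,\bigr\},
\end{equation*}
with the convention $\delta=+\infty$ if every complement is empty. Since $u$ solves Problem \ref{pro:switching}, each term in the set is strictly positive, and the finiteness of the index set guarantees $\delta>0$. Now for any $l\notin\Gamma_i$, the first display yields
\begin{equation*}
u^{\rho,i}_l-(\cM_i u^\rho)_l\ge (u^i_l-(\cM_i u)_l)-\eta_\rho\ge \delta-\eta_\rho.
\end{equation*}
Choosing $\rho_0$ so large that $2\eta_\rho<\delta$ for all $\rho\ge\rho_0$ (possible because $\rho^{-\sigma}\ln\rho\to 0$), the right-hand side exceeds $\eta_\rho$, so $l\notin\Gamma_{\rho,i}$, which gives $\Gamma_{\rho,i}\subseteq \Gamma_i$.

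The only genuine obstacle is justifying $\delta>0$, and this is precisely where the discrete structure of Problem \ref{pro:switching} (finitely many indices and regimes) is essential; a continuous analogue would require additional regularity of the free boundary. The rest is bookkeeping around the Lipschitz bound on $\cM_i$.
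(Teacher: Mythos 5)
Your proof is correct and follows essentially the same route as the paper's: both inclusions rest on the assumed penalization bound combined with the monotonicity (equivalently, the $1$-Lipschitz property) of $\cM_i$, together with the finiteness of $\cI\t\cN$. The only cosmetic difference is that for $\Gamma_{\rho,i}\subseteq\Gamma_i$ the paper argues by contradiction along a subsequence $\rho_n\to\infty$, whereas you extract the uniform gap $\delta>0$ directly, which has the mild advantage of yielding an explicit threshold $\rho_0$.
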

\begin{proof}
We first show that $\Gamma_i\subset \Gamma_{\rho,i}$ for all  $\rho> 0$ and $i\in \cI$. For any fixed $i$, we can deduce from the estimate $u\le u^{\rho}+ C_0\rho^{-\sigma}\ln\rho$ and the monotonicity of $\cM_i$ that
$$
\cM_i u \le \cM_i (u^{\rho}+ C_0\rho^{-\sigma}\ln\rho)= \cM_i u^{\rho}+ C_0\rho^{-\sigma}\ln\rho.
$$
Now let $l$ be an arbitrary element of $\Gamma_i$ so that $u^i_l=(\cM_iu)_l$. It follows that 
\begin{align*}
u^{\rho,i}_l&\le u^{i}_l=(\cM_iu)_l\le \cM_i u^{\rho}+ C_0\rho^{-\sigma}\ln\rho,\\
u^{\rho,i}_l&\ge u^{i}_l- C_0\rho^{-\sigma}\ln\rho\ge (\cM_i u^{\rho})_l- C_0\rho^{-\sigma}\ln\rho,
\end{align*}
which implies that $l$ is in $\Gamma_{\rho,i}$.


Suppose the statement of Theorem \ref{thm:free-bdy} does not hold, then by using the finiteness of $\cN$ and the pigeonhole principle, there exists  a sequence $\{\rho_n\}$ such that $\rho_n\to \infty$ as $n\to\infty$, and an index $l\in \Gamma_{\rho_n,i}\setminus \Gamma_i$ for all $n$. However, the definition of $\Gamma_{\rho_n,i}$ implies that
\begin{align*}
u^i_l-(\cM_iu)_l&=u^i_l-u^{\rho_n,i}_l+u^{\rho_n,i}_l-(\cM_iu^{\rho_n})_l+(\cM_iu^{\rho_n})_l-(\cM_iu)_l\\
&\le C_0\rho_n^{-\sigma}\ln\rho_n+C_0\rho_n^{-\sigma}\ln\rho_n+0\to 0,
\end{align*}
which together with $u^i_l\ge (\cM_iu)_l$ leads to $l\in \Gamma_i$, and hence a contradiction.
\end{proof}

\section{Penalization errors for  vanishing switching cost}\l{sec:error_zero}

In this section, we investigate the asymptotic behaviours of Problems \ref{pro:switching} and \ref{pro:penalty} as the switching cost $c$ tends to zero. We shall show  that the system in Problem \ref{pro:switching} degenerates into a single equation of   HJB type, and establish that the penalty error of Problem \ref{pro:penalty} with zero switching cost is of the same order (up to a log factor) as that in Theorem \ref{thm:panalty_rate}. 

 Throughout this section, to emphasize the dependence on $c$, we shall denote by $u^{c}$ and $u^{c,\rho}$ the solutions to Problems \ref{pro:switching} and \ref{pro:penalty} with a given positive switching cost $c$, respectively, and by $u^{\rho}$ the solution to Problem \ref{pro:penalty} with $c=0$.
Moreover, to identify the limiting behaviour of Problems \ref{pro:switching} and \ref{pro:penalty}, we introduce the following regularity condition on the monotone system $F$:
\begin{Assumption}\l{assum:local}
The function $F$ in Problem \ref{pro:switching} is locally Lipschitz continuous.
\end{Assumption}

We emphasize that even though Assumption \ref{assum:concave} is a sufficient condition for Assumption \ref{assum:local}, it will not be used in this section. In particular, Assumption \ref{assum:local}  is  general enough to cover non-convex/non-concave equations, such as  Isaacs equations.

We first introduce the degenerate problem for zero switching cost.  
\begin{Problem} \l{pro:hjb}
Find $u\in \R^{N}$ such that the vector $\u=(u,\ldots, u)\in \R^{N\t d}$ satisfies
\bb\l{eq:hjb}
\min_{i\in \cI} F_i(\u)=0.
\ee
\end{Problem}

For the classical HJBQVIs where $F_i(u)=\inf_{\a\in\cA_i} B(\a)u^i-b(\a)$ for all $i\in\cI$, Problem \ref{pro:hjb} can be equivalently written as an HJB equation as studied in \cite{witte2011,witte2012}: find $u\in \R^{N}$ satisfying $\inf_{\a\in\cA} B(\a)u-b(\a)=0$ with $\cA=\cup_{i\in \cI}\cA_i$. However, we reiterate that in this work, $F_i$ can depend on all components of the system, and is not assumed to be concave in this section. Moreover, even  for  concave equations, both the penalty scheme, i.e., Problem \ref{pro:penalty} with $c=0$, and its error analysis are essentially different from those  in \cite{witte2011,witte2012}.

By using  the monotonicity condition \eqref{eq:mono}, one can easily establish the following comparison principle for Problem \ref{pro:hjb}, i.e., if $\u=(u,\ldots, u)\in \R^{N\t d}$ and $\v=(v,\ldots, v)\in \R^{N\t d}$ satisfy 
$ \min_{i\in \cI} F_i(\u)\le 0$ and $\min_{i\in \cI} F_i(\v)\ge 0$ respectively, then $u\le v$, which subsequently implies the uniqueness of solutions to  Problem \ref{pro:hjb}.  We shall now
 demonstrate that the solution to Problem \ref{pro:hjb} can be identified as  the limit of the solutions to Problem \ref{pro:switching} with vanishing switching cost. 
 
\begin{Proposition}\l{prop:switching_c}
Let $u^{c}$ solve  Problem \ref{pro:switching} with a switching cost $c>0$. Then  
 $(u^c)_{c>0}$  converges monotonically from below to 
the solution $\u$ of Problem \ref{pro:hjb} as $c\to 0$.  

\end{Proposition}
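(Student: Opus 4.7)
The plan is to show that the family $(u^c)_{c>0}$ is monotone and uniformly bounded as $c\downarrow 0$, extract a pointwise limit, identify that limit as a vector with equal components, and finally verify that this vector solves Problem \ref{pro:hjb}. For monotonicity, let $0<c_1\le c_2$. Since the obstacle is non-increasing in the switching cost, $\max_{j\neq i}(u^{c_1,j}_l-c_2)\le \max_{j\neq i}(u^{c_1,j}_l-c_1)$, so $u^{c_1}$ is a supersolution of Problem \ref{pro:switching} with cost $c_2$. Proposition \ref{prop:comp_s} then yields $u^{c_2}\le u^{c_1}$. Combined with the uniform bound $\|u^c\|\le \|F(0)\|/\gamma$ from Remark \ref{rmk:wp}, this produces a pointwise limit $u^*\in \R^{N\t d}$ with $u^c\nearrow u^*$ as $c\to 0$.

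Passing to the limit in the obstacle inequality $u^{c,i}_l\ge u^{c,j}_l-c$, valid for every $i\ne j$, gives $u^{*,i}_l\ge u^{*,j}_l$ for all pairs $(i,j)$, and hence $u^{*,1}_l=\cdots=u^{*,d}_l$ at every $l\in\cN$. Denoting the common value by $u_l$ and setting $\u\coloneqq(u,\ldots,u)$, Assumption \ref{assum:local} (only continuity of $F$ is used) lets us pass to the limit in $F_i(u^c)\ge 0$ to obtain $F_i(\u)\ge 0$ for every $i\in\cI$, i.e., $\min_{i\in\cI}F_i(\u)\ge 0$.

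The heart of the argument is the reverse inequality $\min_{i\in\cI}F_i(\u)_l\le 0$ at every $l$. I claim that at each $l\in\cN$ and each $c>0$ there exists some $i_c(l)\in\cI$ with $F_{i_c(l)}(u^c)_l=0$. Otherwise, by the complementarity encoded in \eqref{eq:switching}, the obstacle is active for \emph{every} $i$ at $l$, so $u^{c,i}_l=\max_{j\neq i}u^{c,j}_l-c$ for all $i\in\cI$; letting $i^*$ attain $M\coloneqq\max_{j\in\cI}u^{c,j}_l$ would give $M=u^{c,i^*}_l\le M-c$, contradicting $c>0$. Since $|\cI|=d$ is finite, the pigeonhole principle furnishes, for each $l$, a fixed index $i(l)\in\cI$ and a sequence $c_n\downarrow 0$ with $F_{i(l)}(u^{c_n})_l=0$ for every $n$; passing to the limit in $c_n$ yields $F_{i(l)}(\u)_l=0$, so $\min_{i\in\cI}F_i(\u)_l=0$. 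Together with the uniqueness for Problem \ref{pro:hjb} recorded before the statement, $\u$ is the unique such solution.

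The main obstacle is precisely this last step: the pointwise limit $\u$ is a priori only a supersolution of the HJB equation, and one has to exclude the possibility that \emph{all} obstacles activate simultaneously at some grid point in the limit $c\to 0$. The short maximum-component argument above is what removes this possibility, and everything else is a routine application of the comparison principle of Proposition \ref{prop:comp_s} together with the continuity of $F$.
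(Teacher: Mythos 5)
Your proof is correct and follows essentially the same route as the paper's: monotonicity via the supersolution/comparison argument, equality of the limit components from the obstacle constraint, $\min_i F_i(\u)\ge 0$ by continuity, and the reverse inequality by observing that the obstacle cannot be active at the maximal component (the paper states this directly rather than by contradiction) followed by the same pigeonhole/subsequence extraction. Your parenthetical remark that only continuity of $F$ is needed here, not Assumption \ref{assum:local}, is also consistent with the paper.
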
 
\begin{proof}
It is easy to check that if $c_1>c_2>0$, then $u^{c_2}$ is a supersolution to Problem \ref{pro:switching} with a switching cost $c_1$. Hence the comparison principle and the a priori bound  (see Remark \ref{rmk:wp}) imply that for each $i\in\cI$, $(u^{c,i})_{c>0}$ converges monotonically from below to some vector $\bar{u}^i\in \R^{N}$ as $c\to 0$. Moreover, since $u^{c,i}\ge \cM_i u^c\ge u^{c,j}-c$ for all $j\not =i$, $c>0$, we have $\bar{u}^i\equiv \bar{u}$ for all $i\in\cI$.

We now show $\u\coloneqq (\bar{u},\ldots, \bar{u})$ solves Problem \ref{pro:hjb}. For any given $i\in\cI$, using the supersolution property of $u^c$, we have $F_i(u^c)\ge 0$ for all $c>0$. Hence  the continuity of $F$ implies that $\min_{i\in\cI}F_i(\u)\ge 0$. 
On the other hand, let $l\in\cN$ be a fixed index. For any given $c> 0$, we consider the component $i_{l,c}$ where $u^{c,i_{l,c}}_l=\max_{j\in\cI}u^{c,j}_l>(\cM_{i_{l,c}}u^c)_l$, and consequently $F_{i_{l,c}}(u^c)_l= 0$. As $c\to 0$, since $\cI$ is a finite set, by passing to a subsequence, we can assume there exists $\{c_n\}\to 0$ as $n\to \infty$, and a component $i_l\in\cI$ such that 
$F_{i_{l}}(u^{c_n})_l= 0$ for all $n\in\N$.
Thus letting $c_n\to 0$, we have
$\min_{i\in \cI} F_{i}(\u)_l \le F_{i_{l}}(\u)_l = 0$.
Since $l$ is an arbitrary index, we conclude $\min_{i\in \cI} F_{i}(\u)=0$.  
\end{proof}
 
Because Problem \ref{pro:hjb} is the limiting equation of Problem \ref{pro:switching} as $c\to 0$, we now analyze the approximation error of Problem \ref{pro:penalty} with $c=0$ to Problem \ref{pro:hjb}, which indicates the asymptotic behaviour of the penalization error of Problem \ref{pro:penalty} for small enough switching cost.

\begin{Theorem}\l{thm:rate_0}
The solution $u^\rho$ of Problem \ref{pro:penalty} (with $c=0$) converges monotonically from below to 
the solution $\u$ of Problem \ref{pro:hjb} as $\rho\to \infty$.
Moreover, if we further assume 
Assumption \ref{assum:local} holds and there exist positive constants $\tau$ and $\sigma$ such that $\pi(y)\ge \tau y^{1/\sigma}$ for all $0\le y\le 2\|F(0)\|/\gamma$, then 
 the following error estimate holds:
$$
0\le \u-u^\rho\le C_1/\rho^\sigma,
$$
for some constant $C_1>0$, independent of the penalty parameter $\rho$.
\end{Theorem}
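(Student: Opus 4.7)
For the monotone convergence, Theorem \ref{thm:mono_conv} applied with $c=0$ already yields that $u^\rho$ increases to some limit $\bar u \in \R^{N\t d}$ as $\rho \to \infty$ (cf.~Remark \ref{rmk:wp}). To identify $\bar u$ with $\u$, the a priori bound $\|u^\rho\| \le \|F(0)\|/\gamma$ from Lemma \ref{lem:bound_p} and the continuity of $F$ give $\rho\,\pi(u^{\rho,j}_l - u^{\rho,i}_l) \le C$, with $C$ as in \eqref{eq:C}. Since $\pi$ is strictly positive on $(0,\infty)$, letting $\rho \to \infty$ forces $\bar u^j_l \le \bar u^i_l$ for all $(i,j,l)$, and swapping $i$ and $j$ yields $\bar u^i = \bar u^j$, so $\bar u = (u,\ldots,u)$ for some $u \in \R^N$. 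Non-negativity of the penalty term then gives $F_i(\bar u) \ge 0$ for every $i$; moreover, at each $l$ the penalty equation evaluated at any $i^\ast(l,\rho) \in \arg\max_i u^{\rho,i}_l$ reduces to $F_{i^\ast(l,\rho)}(u^\rho)_l = 0$, since $\pi$ vanishes on the non-positive quantities $u^{\rho,j}_l - u^{\rho,i^\ast}_l$. A pigeonhole argument over the finite set $\cI$ along a subsequence $\rho_n \to \infty$ then produces some $i(l) \in \cI$ with $F_{i(l)}(\bar u)_l = 0$, so $\min_i F_i(\bar u) = 0$; uniqueness of the solution to Problem \ref{pro:hjb} gives $\bar u = \u$.

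For the error estimate I would first exploit the penalty inequality: combining $\pi(y) \ge \tau y^{1/\sigma}$ on $[0,2\|F(0)\|/\gamma]$ with $\rho \sum_{j \ne i} \pi(u^{\rho,j}_l - u^{\rho,i}_l) = F_i(u^\rho)_l \le C$ yields the uniform gap bound $(u^{\rho,j}_l - u^{\rho,i}_l)^+ \le (C/(\tau\rho))^\sigma$ for all $i,j \in \cI$ and $l \in \cN$. Equivalently, setting $\bar v_l := \max_i u^{\rho,i}_l$ and $\bar{\v} := (\bar v,\ldots,\bar v) \in \R^{N\t d}$, one has $0 \le \bar{\v} - u^\rho \le (C/(\tau\rho))^\sigma$.

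Next I would build a supersolution of Problem \ref{pro:hjb}. Set $w := \bar v + C'/\rho^\sigma$ and $\v_w := (w,\ldots,w)$, with the constant $C'$ to be chosen. By \eqref{eq:translation} applied componentwise, $F_i(\v_w)_l \ge F_i(\bar{\v})_l + \gamma C'/\rho^\sigma$. By Assumption \ref{assum:local} (local Lipschitz continuity of $F$ on a bounded set containing $u^\rho$, $\bar{\v}$, $\v_w$), together with $F_i(u^\rho) \ge 0$ and the gap bound, $F_i(\bar{\v})_l \ge F_i(u^\rho)_l - L_F\|\bar{\v}-u^\rho\| \ge -L_F(C/(\tau\rho))^\sigma$, where $L_F$ is a local Lipschitz constant of $F$. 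Choosing $C' := L_F(C/\tau)^\sigma/\gamma$ then makes $F_i(\v_w)_l \ge 0$ for every $(i,l)$, so $\v_w$ is a supersolution of Problem \ref{pro:hjb}. The comparison principle recorded for Problem \ref{pro:hjb} in the excerpt gives $u \le w$ componentwise, and hence $0 \le \u - u^\rho \le (\bar{\v} - u^\rho) + C'/\rho^\sigma \le C_1/\rho^\sigma$ with $C_1 := (C/\tau)^\sigma(1 + L_F/\gamma)$.

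The hard part is the rigidity imposed by Problem \ref{pro:hjb}: any valid supersolution must have equal components across the switching regimes. This rules out the construction used for positive switching cost (a scalar shift of $u^\rho$), since $u^\rho$ generically has unequal components. The plan sidesteps this via a two-step procedure: the penalty estimate first collapses $u^\rho$ to the equal-component vector $\bar{\v}$ at cost $\cO(\rho^{-\sigma})$, and then local Lipschitz continuity of $F$, which is precisely what Assumption \ref{assum:local} supplies (and the only role it plays in the hypotheses), lets the supersolution inequality $F_i(u^\rho)\ge 0$ survive this collapse up to the same order, with the remaining slack neutralized by an additional translation of order $\cO(\rho^{-\sigma})$ via \eqref{eq:translation}.
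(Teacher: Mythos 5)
Your proposal is correct and follows essentially the same route as the paper: the limit is identified via the blow-up of the penalty term and a pigeonhole/subsequence argument over the finite index set, and the rate follows from the gap bound $(u^{\rho,j}-u^{\rho,i})^+\le (C/(\tau\rho))^\sigma$, local Lipschitz continuity, the translation inequality \eqref{eq:translation}, and the comparison principle for Problem \ref{pro:hjb}. The only (immaterial) difference is that you build the equal-component supersolution from the componentwise maximum $\bar v_l=\max_i u^{\rho,i}_l$, whereas the paper uses a fixed component $(u^{\rho,i},\ldots,u^{\rho,i})$ for each $i$.
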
 
 \begin{proof}
 By Theorem \ref{thm:mono_conv}, $(u^\rho)_{\rho\ge 0}$ converge monotonically from below to some element $\u=(\bar{u}^1,\ldots, \bar{u}^d)\in \R^{N\t d}$. Since it holds that 
 $$
 0=\lim_{\rho\to\infty}\bigg\{F_i(u^{\rho})-\rho\sum_{j\not =i}\pi( u^{\rho,j}-u^{\rho,i})\bigg\}\le \lim_{\rho\to\infty}F_i(u^{\rho})=F_i(\u),
 $$
 we know $\bar{u}^1=\ldots=\bar{u}^d$ (otherwise the first limit above would blow up). It remains to establish that $\min_{i\in \cI}F_i(\u)\le 0$. To do so, we pick, for each $\rho>0$ and  $l\in \cN$, a component $i_{l,\rho}$ such that $u^{\rho,i_{l,\rho}}_l=\max_{j\in\cI}u^{\rho,j}_l$, so that $F_{i_{l,\rho }}(u^\rho)_l= 0$. The desired result is then established by passing to a subsequence as in the proof of Proposition \ref{prop:switching_c}.
 
 The fact that $u^\rho\le \u$ implies that   it suffices to show  there exists a constant $C_1$, such that for each $i\in \cI$ and $\rho> 0$,  $(u^{\rho,i},
 \ldots, u^{\rho,i})+C_1/\rho^\sigma\in \R^{N\t d}$ is a supersolution to  \eqref{eq:hjb}.
Note that $(u^\rho)_{\rho\ge 0}$ are  bounded by $\|F(0)\|/\gamma$ (see Lemma \ref{lem:bound_p}), hence we have 
 $$
 \sum_{j\not=i}\tau [(u^{\rho,j}-u^{\rho,i})^+]^{1/\sigma}\le \sum_{j\not=i}\pi(u^{\rho,j}-u^{\rho,i})\le C/\rho, \q \fa i\in \cI, \, j\not =i,
 $$
 where $C$ is defined as in \eqref{eq:C}. Then using the local Lipschitz continuity of  $F$, we obtain that
 $$
\|F_{j}(u^\rho) -F_j(\hat{\u}^i)\|\le L_{\textrm{lip}}\|u^\rho-\hat{\u}^i\| \le L_{\textrm{lip}} \left(\f{C}{\tau\rho}\right)^\sigma,
 $$
where $\hat{\u}^i\coloneqq (u^{\rho,i},\ldots, u^{\rho,i})$. Therefore, by using the inequality \eqref{eq:translation} and setting  $\gamma C_1=L_{\textrm{lip}} (C/\tau)^\sigma$, one can conclude $\hat{\u}^i+C_1/\rho^\sigma$ is a supersolution to  \eqref{eq:hjb} through the following estimate:
$$
F_j(\hat{\u}^i+C_1/\rho^\sigma)\ge F_j(\hat{\u}^i)+\gamma C_1/\rho^\sigma\ge F_{j}(u^\rho)\ge 0  \q \fa j\in \cI,
$$
which implies that $\hat{\u}^i+C_1/\rho^\sigma\ge \u$ for all $\rho> 0$ and $i\in \cI$.
 \end{proof}
\begin{Remark}\l{rmk:conv_c}
Under Assumption \ref{assum:local}, one can also show the rate of convergence for Problem \ref{pro:switching} to Problem \ref{pro:hjb} is of first order in the switching cost. In fact, let $u^{c}$ solve  Problem \ref{pro:switching} with a switching cost $c>0$. Then we have $\|u^{c,i}-u^{c,j}\|\le c$ for all $i,j\in \cI$. Then following the proof of Theorem \ref{thm:rate_0}, we see it holds for some constant $K>0$ that $0\le \u-u^c\le Kc$.
\end{Remark} 
 
Summarizing the above discussions, we can derive another  upper bound of the penalization error  for Problem \ref{pro:penalty} with the penalty function $\pi(y)=y^+$ and positive switching cost,  which enables us to explain the asymptotic behaviours of the penalty errors 
observed in Section \ref{sec:numerical}.
\begin{Theorem}\l{thm:sharper_bdd}
For any given switching cost $c> 0$ and  penalty parameter $\rho> 0$, let $u^{c}$ and $u^{c,\rho}$ be the solutions to Problems \ref{pro:switching} and \ref{pro:penalty}, respectively. Suppose Assumption \ref{assum:local} holds and the penalty function is given by $\pi(y)=y^+$. Then we have
$0\le u^{c}-u^{c,\rho}\le C_1(1/\rho+c\rho),$ for  some constant $C_1>0$, independent of $c$ and $\rho$.
\end{Theorem}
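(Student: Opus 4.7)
The plan is to decompose the error using the limiting HJB solution $\mathbf{u}$ of Problem \ref{pro:hjb} together with the zero-switching-cost penalized solution $u^\rho$ of Problem \ref{pro:penalty} with $c=0$. Proposition \ref{prop:switching_c} provides $u^c \le \mathbf{u}$, and a short comparison argument (showing that increasing $c$ produces a subsolution of the penalized equation with smaller switching cost) gives $u^{c,\rho}\le u^\rho$ for all $c>0$. Consequently
\bb\l{eq:decomp}
0\le u^c-u^{c,\rho}\le \mathbf{u}-u^{c,\rho}= (\mathbf{u}-u^\rho)+(u^\rho-u^{c,\rho}),
\ee
and the lower bound $0\le u^c-u^{c,\rho}$ is immediate from Theorem \ref{thm:mono_conv}. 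For the penalty $\pi(y)=y^+$ we have $\pi(y)\ge y^{1/1}$ on $[0,\infty)$, so Theorem \ref{thm:rate_0} applies with $\sigma=\tau=1$ and directly yields $\mathbf{u}-u^\rho\le C/\rho$ for a constant independent of $\rho$.

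It remains to bound $u^\rho-u^{c,\rho}$ by $\cO(c\rho)$. The key step is to show that $u^{c,\rho}+L$ is a supersolution to the $c=0$ penalized equation for a suitable shift $L$. Since the penalty term $\pi(y^j-y^i)=(y^j-y^i)^+$ is translation-invariant under a common vertical shift, one has
\[
F_i(u^{c,\rho}+L)-\rho\sum_{j\not=i}\pi(u^{c,\rho,j}-u^{c,\rho,i})\ge F_i(u^{c,\rho})+\gamma L-\rho\sum_{j\not=i}(u^{c,\rho,j}-u^{c,\rho,i})^+
\]
using the translation inequality \eqref{eq:translation}. Substituting $F_i(u^{c,\rho})=\rho\sum_{j\not=i}(u^{c,\rho,j}-c-u^{c,\rho,i})^+$ and using the elementary bound $(x)^+-(x-c)^+\le c$ for $c\ge 0$, the right-hand side is at least $\gamma L-\rho(d-1)c$. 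Choosing $L=\rho(d-1)c/\gamma$ makes $u^{c,\rho}+L$ a supersolution of the $c=0$ penalized equation, so the comparison principle of Proposition \ref{prop:comparison_p} gives $u^\rho\le u^{c,\rho}+L$, i.e.\ $u^\rho-u^{c,\rho}\le (d-1)c\rho/\gamma$.

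Inserting the two estimates into \eqref{eq:decomp} yields $u^c-u^{c,\rho}\le C/\rho+(d-1)c\rho/\gamma$, which gives the claim after setting $C_1\coloneqq \max(C,(d-1)/\gamma)$. The main obstacle is the middle step: because the interconnected obstacles are translation-invariant, one cannot follow the standard obstacle-problem recipe of shifting $u^{c,\rho}$ to meet a constraint; the crucial observation is that this same translation invariance of the penalty term $(\cdot)^+$ lets the shift $L$ interact only with the monotonicity constant $\gamma$ of $F$, while the cost $c$ is absorbed through the one-sided Lipschitz property of the positive part. Everything else is an immediate consequence of results already established, in particular Theorem \ref{thm:rate_0} and the comparison principles for the penalized equations.
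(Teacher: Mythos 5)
Your proposal is correct and follows essentially the same route as the paper: the same decomposition through the limiting HJB solution $\u$ and the zero-cost penalized solution $u^\rho$, the bound $\u-u^\rho\le C/\rho$ from Theorem \ref{thm:rate_0}, and the key step of showing $u^{c,\rho}+(d-1)c\rho/\gamma$ is a supersolution of the $c=0$ penalized equation via the translation inequality \eqref{eq:translation} and the one-sided Lipschitz bound $\pi(b)-\pi(b-c)\le c$ (which the paper writes as $c+\pi(b-c)\ge\pi(b)$). The only cosmetic difference is that your supersolution verification substitutes the penalized equation explicitly, whereas the paper states the same estimate more tersely.
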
 
\begin{proof}
Note that for any $b\in \R$ and $c>0$, we have
$$
c+\pi(b-c)=c+\max(b-c,0)=\max(b,c)\ge \max(b,0)=\pi(b),
$$
 which,  along with the inequality  \eqref{eq:translation} (with $u=u^{c,\rho}$ and $L=(d-1)c\rho/\gamma$), implies that  $u^{c,\rho}+(d-1)c\rho/\gamma$ is a supersolution to \eqref{eq:penalty} with $c=0$. Hence one can deduce from the comparison principle of the penalized equation \eqref{eq:penalty} the estimate $0\le u^{\rho}-u^{c,\rho}\le (d-1)c\rho/\gamma$. Then using  Proposition \ref{prop:switching_c} and Theorem \ref{thm:rate_0}, we conclude that
\begin{equation*}
u^{c}-u^{c,\rho}\le u^{c}-\u+\u-u^\rho+u^\rho-u^{c,\rho}\le C_1(1/\rho+c\rho),
\end{equation*}
where  $C_1$ is a constant independent of  $\rho$ and $c$.
\end{proof}

A direct consequence of   Theorems \ref{thm:panalty_rate} and \ref{thm:sharper_bdd} is that for the penalty function $\pi(y)=y^+$ and a fixed large enough penalty parameter, the following  estimate of the penalization error holds under Assumption \ref{assum:concave} as $c\to 0$:
\bb\l{eq:err_min}
0\le u^{c}-u^{c,\rho}\le C_1\min\bigg(-\f{\ln \rho}{\ln(1-c)\rho}, \; \f{1}{\rho}+c\rho\bigg),
\ee
for some constant $C_1$, independent of  $\rho$ and $c$. Thus for a sufficiently small switching cost, the penalty error is dominated by the term $C_1/\rho$, i.e., the penalization error with $c=0$.

\section{Numerical experiments}\l{sec:numerical}
 
In this section, we illustrate the theoretical findings and demonstrate the effectiveness of  the penalty schemes through numerical experiments. 
We present  an infinite-horizon optimal switching problem  and investigate the convergence of Problem \ref{pro:penalty} with respect to the penalty parameter. We shall also examine the dependence of the penalization errors on the switching cost.

Motivated by Remark \ref{rmk:penalty}, we shall focus on the penalty function with degree 1, i.e., $\pi(y)=y^+$.
Due to the semismoothness of the chosen penalty function, one can easily construct convergent iterative methods for solving Problem \ref{pro:penalty} with a fixed penalty parameter (see e.g.~\cite{bokanowski2009,witte2011,witte2012,reisinger2018} for details). Roughly speaking, starting with an initial guess $u^{(0)}$ of the solution to Problem \ref{pro:penalty}, for each $k\ge 0$, we compute the next iterate $u^{(k+1)}$ by solving 
$$
G^\rho[u^{(k)}]+\cL^{(k+1)}[u^{(k)}](u^{(k+1)}-u^{(k)})=0,
$$
where $\cL^{(k+1)}[u^{(k)}]$ is a generalized derivative of \eqref{eq:penalty} at the iterate $u^{(k)}$.  In practice, such a generalized derivative can be computed by policy iteration if $F_i(u)=\inf_{\a\in \cA_i}\sup_{\b\in \cB_i}B(\a,\b)u-b(\a,\b)$ for 
some sets $\cA_i, \cB_i$ and some coefficients $B$ and $b$, or more generally by a slanting function of $F=(F_i)_{i\in\cI}$ if it exists. One can further show that the iterates $(u^{k})_{k\ge 0}$ are  locally superlinearly convergent to the solution of Problem \ref{pro:penalty} or even globally convergent if $F$ is concave.

To motivate the discrete QVIs solved in our numerical experiments, we introduce the following infinite-horizon optimal switching problem (see e.g.~\cite{pham2009}). Let $(\Om, \cF_t, \bP)$ be a filtered probability space and $\a=(\a_t)_{t\ge 0}$ be a control process such that $\a_t=\sum_{k\ge 0}i_k1_{[\tau_k,\tau_{k+1})}(t)$, where  $(\tau_k)_{k\ge 0}$ is a non-decreasing sequence of stopping times representing the decision on ``when to switch", and 
for each $k\ge 0$, $i_k$ is an $\cF_{\tau_k}$-measurable random variable valued in the discrete space $\cI=\{1,\ldots, d\}$, $d\ge 2$, representing the   decision on ``where to switch". That is, the decision maker chooses  regime $i_k$ at the time $\tau_k$ for all $k\ge 0$. 

For any given control strategy $\a$, we consider the following controlled state  equation:
$$
dX^\a_t=(r+\nu(\a_t)(\mu-r))X^\a_t dt+\sigma\nu(\a_t) X^\a_t\,dB_t, \q t>0;\q X^\a_0=x,
$$
where $r,\mu,\sigma,x>0$ are given constants,  $(B_t)_{t>0}$ is a one-dimensional Brownian motion defined on $(\Om, \cF_t, \bP)$, and $\nu(i)=(i-1)/(d-1)$, $i\in \cI$. Then the objective function associated with the control strategy $\a$ is given by:
$$
J(x,\a)=\ex\bigg[\int_0^\infty e^{-rt}\ell(X^\a_t)\,dt-\sum_{k\ge 0}e^{-r\tau_{k+1}}c^{i_k,i_{k+1}}\bigg],
$$
where $\ell$ represents the running reward function and $c^{i,j}$ represents the switching cost from regime $i$ to $j$, $\fa i,j\in \cI$. For each $i\in \cI$, let $\bA^i$ be all  control strategies starting with regime $i$, i.e., $i_0=i$ and $\tau_0=0$. Then 
 the decision maker has the following value functions:
$$
V^i(x)=\sup_{\a\in \bA^i}J(x,\a), \q i\in \cI.
$$

Suppose the switching costs are positive, i.e., $c^{i,j}>0$ for $i\not =j$, then one can show by using the dynamic programming principle (see \cite{pham2009}) that the value function $V=(V^i)_{i\in \cI}$ satisfies the following system of quasi-variational inequalities: for all $i\in \cI$ and $x\in (0,\infty)$,
\bb\l{eq:qvi}
\min\bigg[-\f{1}{2}\sigma^2\nu(i)^2x^2 V^i_{xx}-(r+\nu(i)(\mu-r))xV^i_x+rV^i-\ell(x), V^i-(\cM_iV)\bigg]=0, 
\ee
where $\cM_iV=\max_{j\not =i}(V^i-c^{i,j})$. For our numerical tests, we  assume $c^{i,j}\equiv c$ for $i\not =j$, and set  other parameters  as $\sigma=0.2$, $\mu=0.06$, $r=0.02$.

Now we derive the finite-dimensional QVIs by discretizing \eqref{eq:qvi}. Note  that in this work, we focus on examining the  performance of penalty methods for solving  discrete QVIs resulting from discretizing \eqref{eq:qvi} with a fixed mesh size, instead of the convergence of the discretization to \eqref{eq:qvi} as the mesh size tends to zero. Therefore, for simplicity, we shall localize \eqref{eq:qvi} on the computational domain $(0,2)$ with homogenous Dirichlet boundary condition $u=0$ at $x=2$, and solve the localized equation on a uniform grid $\{x_l\}=\{lh\}_{l=0}^{N-1}$ with $h=2/N$.
We further derive a  monotone discretization of \eqref{eq:qvi}, which uses forward differences for the first derivates and central difference for all second derivatives. It is easy to verify that the resulting discrete system \eqref{eq:switching} satisfies the monotonicity condition \eqref{eq:mono} with $\gamma=0.02$ and also Assumption \ref{assum:concave}. 

We proceed to discuss implementation details for solving Problem \ref{pro:penalty} with semismooth Newton methods. 
The initial guess $u^{(0)}$ shall be taken as the solution to  \eqref{eq:penalty} with $\rho=0$, i.e., $F_i(u)=0$ for all $i\in \cI$, and the iterations will be terminated once  the desired tolerance is achieved, i.e., 
$\f{\|u^{(k)}-u^{(k-1)}\|}{\max(\|u^{(k)}\|,\textrm{scale})}<\textrm{tol}$, where the scale parameter is chosen to guarantee that no unrealistic level of accuracy will   be imposed if the solution is close to zero. 
We take $\textrm{tol}=10^{-9}$ and $\textrm{scale}=1$ for all experiments. Computations are performed using \textsc{Matlab} R2016a on a  laptop with 2.2 GHz Intel Core i7 and 16 GB memory.\footnotemark

\footnotetext{The \textsc{Matlab} code of the numerical experiments can be found via the link: https://github.com/yfzhang01/Penalty-methods-for-optimal-switchings.git.}

 \begin{table}[!t]
 \renewcommand{\arraystretch}{1.05} 
\centering
\begin{tabular}[t]{@{}rcccccc@{}}
\toprule

$\rho$ &$10^3$& $2\t10^3$ & $4\t10^3$ & $8\t10^3$ & $16\t10^3$ & $32\t10^3$\\
\midrule
$c=1/2$\\
 (a) & 3.37521 &3.38261	&3.38633	&3.38819	&3.38913	&3.38959	\\
 (b)  & &	0.00884	&0.00444	&0.00222	&0.00111	&0.00056	 \\ 
 (c)  & 5	&6	&6	&6	&6	&6	\\ 
 (d)  &0.0021	&0.0026	&0.0027	&0.0034	&0.0031	&0.0027	\\
$c=1/8$\\
 (a) & 5.26287 &5.27999	&5.28860	&5.29292	&5.29508	&5.29617	\\ 
 (b)  & &	0.02039	&0.01025	&0.00514	&0.00258	&0.00129	 \\ 
 (c)  & 7	&5	&5	&5	&5	&5	\\ 
 (d)  &0.0041	&0.0026	&0.0025	&0.0025	&0.0025	&0.0024	\\
$c=1/32$\\
 (a) & 5.98193 &6.01704	&6.03478	&6.04370	&6.04817	&6.05041	\\
 (b)  & &	0.04183	&0.02114	&0.01063	&0.00533	&0.00267	 \\ 
 (c)  & 6	&6	&5	&5	&5	&5	\\ 
 (d)  &0.0038	&0.0036	&0.0032	&0.0033	&0.0029	&0.0024	\\
$c=1/128$\\
 (a) & 6.23801 &6.30708	&6.34232	&6.36011	&6.36906	&6.37354	\\
 (b)  & &	0.08234	&0.04201	&0.02122	&0.01066	&0.00534	 \\
 (c)  & 5	&5	&4	&4	&4	&4	\\ 
 (d)  &0.0022	&0.0026	&0.0023	&0.0019	&0.0019	&0.0018	\\
$c=1/512$\\
 (a) & 6.35128 &6.42179	&6.45776	&6.47593	&6.48506	&6.48964	\\
 (b)  & &	0.08406	&0.04288	&0.02166	&0.01089	&0.00546	 \\ 
 (c)  & 5	&5	&4	&4	&4	&4	\\ 
 (d)  &0.0021	&0.0022	&0.0018	&0.0018	&0.0019	&0.0018	\\
$c=1/2048$\\
 (a) & 6.37959 &6.45047	&6.48662	&6.50488	&6.51406	&6.51866	\\
 (b)  & &	0.08449	&0.04310	&0.02177	&0.01094	&0.00548	 \\ 
 (c)  & 4	&4	&4	&4	&4	&4	\\ 
 (d)  &0.0018	&0.0019	&0.0019	&0.0018	&0.0019	&0.0019	\\
$c=0$\\
 (a) & 6.38903 &6.46003	&6.49624	&6.51454	&6.52373	&6.52834	\\
 (b)  & &	0.08464	&0.04318	&0.02181	&0.01096	&0.00549	 \\ 
 (c)  & 4	&4	&3	&3	&3	&3	\\ 
 (d)  &0.0018	&0.0019	&0.0015	&0.0015	&0.0015	&0.0015	\\
\bottomrule
\end{tabular}
\caption{Numerical results for the two-regime optimal switching problem with different switching costs and penalty parameters. Shown are: (a) the numerical solutions $u^{c,\rho,1}$ at $x=0.5$; (b) the increments $\|u^{c,\rho} -u^{c,\rho/2}\|$; (c) the number of iterations; (d) the overall runtime in seconds.}
\label{table:2d}
\end{table}%

We first study the performance of the penalty approximation for the discrete  system corresponding to the two-regime case, i.e., $d=2$, and the running reward $\ell(x)=2(1-x)1_{(0.75,1]}(x)$. Note that the discontinuity of $\ell$ at $x=0.75$ should not affect our convergence analysis since we are solving a finite-dimensional nonlinear system resulting from a fixed discretization of \eqref{eq:qvi}.

Table \ref{table:2d} contains, for different switching costs and penalty parameters, the numerical solutions of Problem \ref{pro:penalty} with a fixed mesh size $h=0.02$ (hence the total number of unknowns is $2N=200$). Line (a) shows that for a fixed switching cost $c$, regardless of whether $c$ is positive or not, the numerical solutions converge monotonically from below to the exact solution as the penalty parameter $\rho\to \infty$. The first-order convergence of the penalization error (in the sup-norm) with respect to the penalty parameter $\rho$ can be deduced from line (b), which confirms the theoretical  results (the log factor has not been observed, 
c.f.~Theorems \ref{thm:panalty_rate} and \ref{thm:rate_0}). 
Moreover,  by fixing the penalty parameter $\rho$ and comparing the increment $\|u^{c,\rho} -u^{c,\rho/2}\|$ columnwise,
 one can observe that the penalty errors first grow  at a rate $1/2$ when the switching cost decreases from $1/2$ to $1/128$, and then stablize to the penalty errors of the  limiting case (with $c=0$)  when $c$ tends to $0$, as asserted by \eqref{eq:err_min}.

The lines (c) and (d) clearly indicate the efficiency of the iterative solver.
We remark that compared with parabolic QVIs, elliptic QVIs are more challenging to solve due to the fact we cannot take the solution at  the previous timestep as an accurate initial guess \cite{forsyth2002,azimzadeh2016weakly}. In fact, Figure \ref{fig:initial} (left) illustrates a large disagreement in the shape and magnitude between  the initial guess $u^{(0)}$ and the final  solution $u^{c,\rho}$ of  Problem \ref{pro:penalty} with $\rho=10^3$ and $c=1/8$. However, 
 we can see that the iterative method solves Problem \ref{pro:penalty} at the accuracy $10^{-9}$ using only a small number of iterations within several milliseconds, which seems to be independent of the size of the penalty parameter $\rho$.

\begin{figure}[htbp]
    \centering
    \includegraphics[width=0.42\columnwidth,height=5cm]{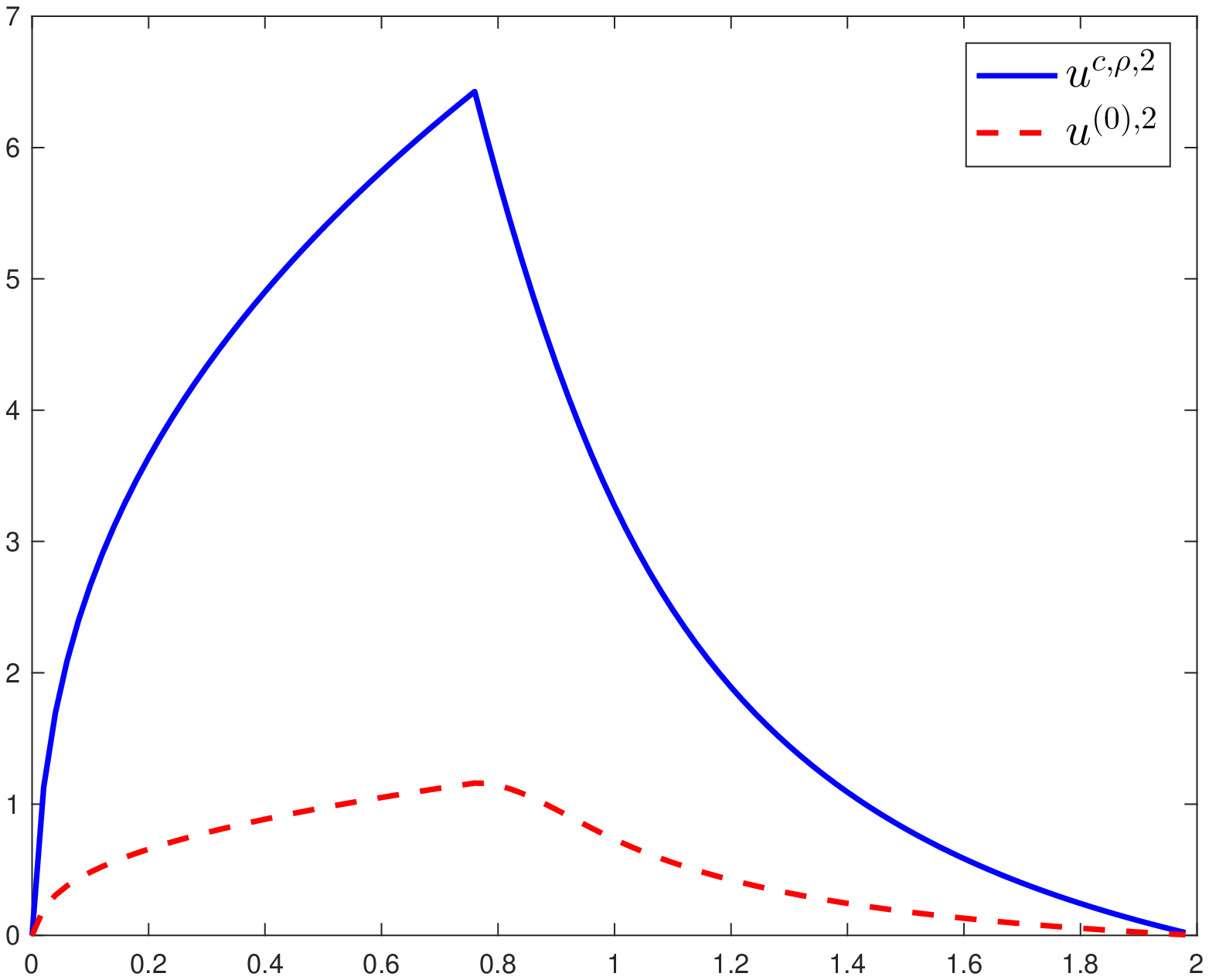}\hfill
    \includegraphics[width=0.42\columnwidth,height=5cm]{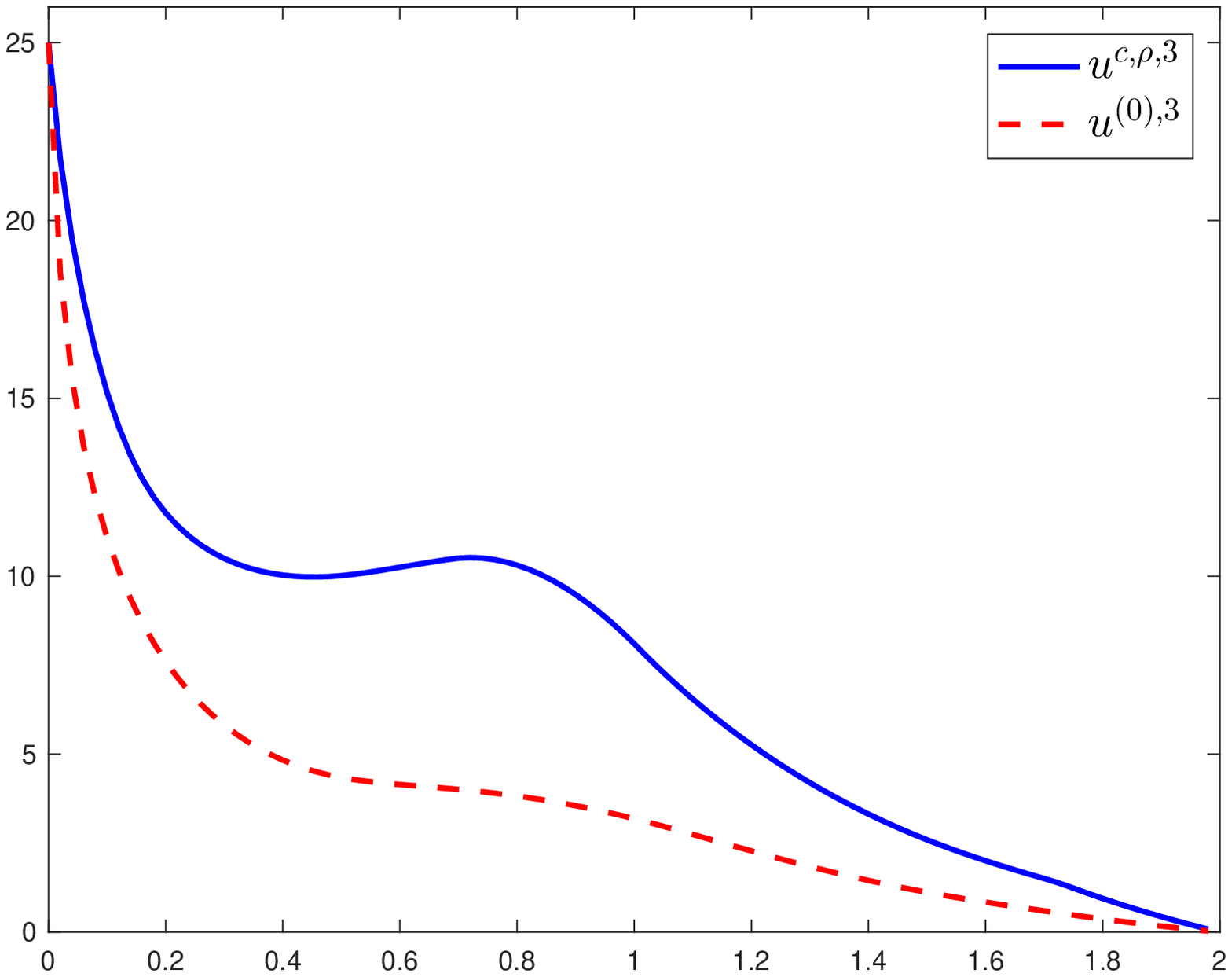}
    \caption{Differences between the last components of the initial guess $u^{(0)}$ and the   solution $u^{c,\rho}$ for  the penalized switching system. Shown are: two-regime problem with $\rho=10^3$ and $c=1/8$ (left), and three-regime problem with $\rho=4\t 10^3$ and $c=1/1024$  (right).}
    \label{fig:initial}
\end{figure}

We now turn to analyze  the convergence of the penalty methods for the nonlinear  system resulting from a three-regime problem, i.e., $d=3$. The running reward function is chosen as 
$$
\ell(x)=\begin{cases} -(x-0.5), & x\in (0,0.5],\\ x-0.5, &x\in (0.5,1], \\ -(x-1.5),  &x\in (1,1.5], \\ x-1.5, &x\in (1.5,1.75],\\
0, &\textnormal{otherwise},
\end{cases}
$$
which admits a mixed convexity. Table \ref{table:3d} presents the numerical solutions of the three-regime penalized equations with   a fixed   mesh $h=0.02$ (hence the total number of unknowns is $3N=300$), and different penalty parameter $\rho$ and switching cost $c$.
Lines (a) and (b) indicate that for a fixed switching cost, the numerical solutions converge monotonically with a first-order accuracy, as the penalty parameter tends to infinity.
Moreover, similar to the two-regime problem, we can  observe that as the switching cost tends to zero, the penalty errors corresponding to a fixed penalty parameter $\rho$ first increase at a rate  $\cO(c^{-1/2})$ (for $c\in [1/4,1/1024]$) and then approach to the penalization errors with $c=0$. Lines (c) and (d) summarize the number of required iterations and the computational time,  which illustrate the efficiency of the iterative solvers for the penalized problems. Despite the relatively poor initial guess (c.f.~Figure \ref{fig:initial} (right)),   the desired accuracy $10^{-9}$ is in general obtained  within 0.015 seconds using a reasonable amount of iterations, which does not depend on the magnitude of the penalty parameter.

 \begin{table}[!t]
 \renewcommand{\arraystretch}{1.05} 
\centering
\begin{tabular}[t]{@{}rcccccc@{}}
\toprule

$\rho$ &$4\t10^3$ & $8\t10^3$ & $16\t10^3$ & $32\t10^3$ & $64\t 10^3$& $128\t10^3$\\
\midrule
$c=1/4$\\
 (a) &6.849917&	6.849942&	6.849954&	6.849960&	6.849962&	6.849964	 \\
 (b)  & &	0.000208&	0.000104&	0.000052&	0.000026&	0.000013	 \\ 
 (c)  & 12&	12&	12&	12&	12&	12	\\ 
 (d)  &0.0112&	0.0113&	0.0111&	0.0112&	0.0113&	0.0113	\\
$c=1/16$\\
 (a) &7.405239&	7.405507&	7.405641&	7.405708&	7.405742&	7.405758	 \\
 (b)  & &	0.000451&	0.000226&	0.000113&	0.000056&	0.000028	 \\ 
 (c)  & 12&	12&	12&	12&	12&	12	\\ 
 (d)  &0.0115&	0.0115&	0.0115&	0.0111&	0.0115&	0.0117	\\
$c=1/64$\\
 (a) &7.791271&	7.792091&	7.792499&	7.792703&	7.792805&	7.792856	 \\
 (b)  & &	0.001003&	0.000501&	0.000250&	0.000125&	0.000062	 \\ 
 (c)  & 13&	13&	13&	13&	13&	13	\\ 
 (d)  &0.0121&	0.0122&	0.0151&	0.0130&	0.0127&	0.0127	\\
$c=1/256$\\
 (a) &8.009477&	8.011330&	8.012258&	8.012722&	8.012955&	8.013071	 \\
 (b)  & &	0.002016&	0.001010&	0.000505&	0.000253&	0.000126	 \\
 (c)  & 14&	14&	14&	14&	14&	14	\\ 
 (d)  &0.0131&	0.0130&	0.0132&	0.0129&	0.0129&	0.0130	\\
$c=1/1024$\\
 (a) &8.108554&	8.112341&	8.114262&	8.115229&	8.115715&	8.115958	 \\
 (b)  & &	0.003980&	0.002018&	0.001017&	0.000510&	0.000256	 \\ 
 (c)  & 15&	15&	14&	15&	15&	15	\\ 
 (d)  &0.0144&	0.0145&	0.0130&	0.0145&	0.0144&	0.0143	\\
$c=1/4096$\\
 (a) &8.135298&	8.138958&	8.141012&	8.142047&	8.142567&	8.142828	 \\
 (b)  & &	0.003854&	0.002156&	0.001087&	0.000546&	0.000273	 \\ 
 (c)  & 14&	14&	14&	14&	14&	14	\\ 
 (d)  &0.0135&	0.0130&	0.0131&	0.0132&	0.0133&	0.0132	\\
$c=1/16384$\\
 (a) &8.143553&	8.146389&	8.147826&	8.148752&	8.149280&	8.149545	 \\
 (b)  & &	0.002975&	0.001508&	0.000974&	0.000554&	0.000278	 \\ 
 (c)  & 12&	12&	14&	14&	14&	14	\\ 
 (d)  &0.0115&	0.0115&	0.0134&	0.0134&	0.0134&	0.0133	\\
$c=0$\\
 (a) &8.146313&	8.149164&	8.150603&	8.151326&	8.151688&	8.151869	 \\
 (b)  & &	0.002990&	0.001509&	0.000758&	0.000380&	0.000190 \\ 
 (c)  & 12&	12&	12&	12&	12&	11\\ 
 (d)  &0.0112&	0.0112&	0.0111&	0.0111&	0.0111&	0.0103	\\
\bottomrule
\end{tabular}
\caption{Numerical results for the three-regime optimal switching problem with different switching costs and penalty parameters. Shown are: (a) the numerical solutions $u^{c,\rho,1}$ at $x=1$; (b) the increments $\|u^{c,\rho} -u^{c,\rho/2}\|$; (c) the number of iterations; (d) the overall runtime in seconds.}
\label{table:3d}
\end{table}%

\section{Conclusions}
In this paper, we show that the penalty method is a powerful tool to solve a large class of discrete quasi-variational inequalities arising from hybrid  control problems involving  switching controls. We   establish  monotone convergence for the solutions of the penalized equations related to a general class of penalty functions, and rigorously analyze the penalization errors  for both positive switching cost and zero switching cost. These error estimates  further lead to an exact construction of the optimal switching regions. Numerical examples for infinite-horizon optimal switching problems are presented to illustrate the  theoretical findings.

To the best of our knowledge, this is the first paper which proposes penalty approximations for QVIs in such a generality and presents  rigorous error estimates for the penalization errors. Natural next steps would be to extend the penalty approach to interconnected  obstacles with negative switching costs as in \cite{pham2009}, to more general intervention operators as in \cite{azimzadeh2016weakly}, and to  monotone systems with interconnected bilateral obstacles as in \cite{djehiche2017}.

\clearpage
%
%

\end{document}